\newtheorem{thm}{Theorem}[section]
\newtheorem{lem}[thm]{Lemma}
\theoremstyle{definition}
\theoremstyle{remark}
\newtheorem{rem}[thm]{Remark}
\numberwithin{equation}{section}
\begin{document}

\title[Partial Regularity for Singular Solutions]{Partial regularity for singular solutions to the Monge-Amp\`{e}re equation}
\author{Connor Mooney}
\address{Department of Mathematics, Columbia University, New York, NY 10027}
\email{\tt  cmooney@math.columbia.edu}

% ----------------------------------------------------------------
\begin{abstract}
 We prove that solutions to the Monge-Amp\`{e}re inequality 
 $$\det D^2u \geq 1$$
 in $\mathbb{R}^n$ are strictly convex away from a singular set of Hausdorff $n-1$ dimensional measure zero. Furthermore, we show this is optimal by constructing
 solutions to $\det D^2u = 1$ with singular set of Hausdorff dimension as close as we like to $n-1$. As a consequence we obtain $W^{2,1}$ regularity
 for the Monge-Amp\`{e}re equation with bounded right hand side and unique continuation for the Monge-Amp\`{e}re equation with sufficiently regular right hand side.
\end{abstract}
\maketitle
% ----------------------------------------------------------------

%%%%%%%%%%%%%%%%%%%%%%%%%%%%%%%%%%%%%%%%%%%%%%%%%%%%%%%%%%%%%%%%%%%%%%%%%%%%%%%%%%%%%%%%%%%%%%%%%%%%%%%%%%%%
\section{Introduction}
In this paper we investigate the Hausdorff dimension of the set where Alexandrov solutions (see Section \ref{Prelims} for the precise definition) to
$$\det D^2u \geq 1$$
are not strictly convex. Recall that we say that a convex function $u$ is strictly convex at $x_0$ if there exists $L_{x_0}$, a supporting tangent plane at $x_0$, such that
$$\{u = L_{x_0}\} = x_0.$$
Our main theorem is:
\begin{thm}\label{Main}
 Assume $u$ is an Alexandrov solution to
 $$\det D^2u \geq 1$$ 
 in $B_1 \subset \mathbb{R}^n$. Then $u$ is strictly convex away from a singular set $\Sigma$ with
 $$\mathcal{H}^{n-1}(\Sigma) = 0.$$
\end{thm}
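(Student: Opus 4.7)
The plan is to stratify $\Sigma$ by the dimension of contact sets through each point and control each stratum using the Monge-Amp\`ere lower bound. For $x_0 \in \Sigma$, the definition of non-strict convexity produces a supporting plane $L$ at $x_0$ whose contact set $K := \{u = L\}$ has dimension at least one; and $\det D^2u \geq 1 > 0$ forbids $K$ from being $n$-dimensional, since the Alexandrov Monge-Amp\`ere measure vanishes on any open set where $u$ is affine. Hence $\Sigma = \bigcup_{k=1}^{n-1}\Sigma_k$, where $\Sigma_k$ is the set of points lying in some contact set of dimension exactly $k$, and it suffices to prove $\mathcal{H}^{n-1}(\Sigma_k) = 0$ for each $k$.

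The key ingredient is a quantitative transverse growth estimate for $u$ near a flat piece. If $K$ is a contact set of dimension $k$ in an affine $k$-plane $\Pi$, with associated supporting plane $L$, I would study the sections $S_t := \{u < L + t\}$ after normalizing by John's lemma and applying the Alexandrov maximum principle. The Monge-Amp\`ere lower bound $|\partial u(S_t)| \geq |S_t|$, together with an Alexandrov-type control of $t$ in terms of the geometry of $S_t$, should yield a lower bound on the Monge-Amp\`ere mass of any $\varepsilon$-neighborhood of $K$ proportional to $\mathcal{H}^{n-1}(K)$ times a suitable power of $\varepsilon$. Combined with the global bound $|\partial u(B_{1/2})| < \infty$ coming from interior Lipschitz regularity of convex functions, and a Besicovitch-type cover of $\Sigma_k \cap B_{1/2}$ by such $\varepsilon$-neighborhoods, one obtains a uniform control on $\mathcal{H}^{n-1}(\Sigma_k \cap B_{1/2})$ which sharpens to zero as $\varepsilon \to 0$; rescaling handles all of $B_1$.

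The main obstacle I anticipate is the transverse growth estimate for intermediate codimensions $1 < n-k < n-1$, where the sections $S_t$ may elongate in several directions simultaneously and the codimension-one Alexandrov estimate does not apply directly. I would handle these cases by induction on the codimension $n-k$, slicing transversally through $\Pi$ to reduce to a lower-dimensional version of the same problem, with the base case $k = n-1$ following from Alexandrov applied to a thin slab. As a sanity check, the Legendre-dual formulation $v = u^*$ gives $\det D^2 v \leq 1$, and one verifies $\Sigma \subset \partial v(E_v)$ where $E_v$ denotes the non-differentiability set of $v$; this immediately yields $|\Sigma| = 0$ in Lebesgue measure. The goal of the analysis above is precisely to upgrade this soft bound to the refined $\mathcal{H}^{n-1}$ bound claimed in the theorem.
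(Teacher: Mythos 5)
Your overall skeleton --- cover the singular set by small balls, bound the Monge--Amp\`ere mass of each ball from below, and play this off against the finiteness of $|\partial u(B_{1/2})|$ via a Vitali/Besicovitch cover --- is the right one, and it is essentially the skeleton of the paper's proof. The stratification by contact-set dimension is harmless and even helps for $k \geq 2$: there an estimate of the form $Mu(B_r(x)) \geq c\,r^{n-k}$ already forces $\mathcal{H}^{n-k}(\Sigma_k) < \infty$, hence $\mathcal{H}^{n-1}(\Sigma_k) = 0$. But there is a genuine gap exactly where the theorem is hardest, namely on the stratum $\Sigma_1$ of one-dimensional singularities, which by the paper's examples can have Hausdorff dimension arbitrarily close to $n-1$. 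The estimate your plan produces there --- a segment of length $l$ in the contact set plus the volume bound $|S_h| \leq Ch^{n/2}$ of Lemma \ref{SectionGrowth} gives $|S_h^{v}| \leq \frac{C}{l}h^{(n+1)/2}$ for $v = u + \tfrac12|x|^2$, and then Lemma \ref{Alexandrov} gives $Mv(S_h^v) \geq c\,h^{(n-1)/2}$, i.e.\ $Mv(B_r(x)) \geq c\,r^{n-1}$ with $r \sim h^{1/2}$ --- has a \emph{fixed} constant $c$. A covering argument then yields only $\mathcal{H}^{n-1}(\Sigma_1) \leq C < \infty$, not $= 0$; shrinking the radius of your neighborhoods does not improve the constant, so nothing ``sharpens to zero.'' (Note also that without the replacement $u \mapsto u + \tfrac12|x|^2$ the sections at a singular point are not contained in small balls at all, since they contain the whole segment; you need this device, or an equivalent, even to start the covering.)

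The missing idea is the mechanism by which the constant $c$ can be upgraded to $1/\epsilon$ for every $\epsilon$ along a sequence $r_k \to 0$. The paper gets this (Lemmas \ref{NoQuadraticGrowth} and \ref{MongeAmpereMass}) by exploiting that at a singular point the bound $|S_{h,p}^v(x)| \leq C h^{(n+1)/2}$ holds for \emph{every} supporting slope $p$ of $v = u + \tfrac12|x|^2$, not only the one whose contact set contains the segment. This forces $v$ to separate super-quadratically from its tangent planes in at least \emph{two} independent directions ($d_{n-1}(h) = o(h^{1/2})$ for the John ellipsoid axes), which in turn concentrates extra Monge--Amp\`ere mass in balls of radius $r_k \sim d_I(h_k) \ll h_k^{1/2}$ and yields $Mv(B_{r_k}(x)) \geq \frac{1}{\epsilon}r_k^{n-1}$. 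The model obstruction is $|x|^2 + |x_n|$: its sections at points of $\{x_n = 0\}$ with the flat supporting slope have volume $\sim h^{(n+1)/2}$, yet the function is strictly convex everywhere; only by testing the tilted supporting slopes does one distinguish it from a genuine segment singularity. Your formulation, centered on a single supporting plane $L$ and its contact set $K$, cannot make this distinction, so the transverse growth estimate you propose, even if carried out by slicing and induction on codimension, would stop at finiteness of $\mathcal{H}^{n-1}(\Sigma_1)$. (Your Legendre-duality sanity check is fine, but as you note it only gives $|\Sigma| = 0$.)
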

We show this is optimal by constructing solutions to $\det D^2u = 1$ with singular set of Hausdorff dimension as close as we like to $n-1$.
This result is interesting especially for $n \geq 3$ since it is well-known that in two dimensions solutions to $\det D^2u \geq 1$ are strictly convex.

Previous results on the singularities of convex functions include those of Alberti, Ambrosio and Cannarsa (see \cite{A}, \cite{AAC}), who show that the
nondifferentiability set of a semi-convex function is $n-1$ rectifiable. Theorem \ref{Main} may be viewed as a strengthening
of this result when we have positive lower {\it and} upper bounds on $\det D^2u$, in which case Caffarelli's regularity theory (see below) gives differentiability 
at points of strict convexity. (In fact, if $\det D^2 u = 1$ in the Alexandrov sense then $\Sigma$ is precisely the set where $u$ fails to be a classical solution.)
However, it is important to note that points in $\Sigma$ may still be
points of differentiability for $u$ (see for example the Pogorelov solution to $\det D^2u = 1$ below), and 
without an upper bound on $\det D^2u$ the points of non-differentiability for $u$ may not be in $\Sigma$ (take
for example $u = |x|^2 + |x_n|$, which solves $\det D^2u \geq 1$ and is strictly convex everywhere).

Theorem \ref{Main} has several applications to the regularity theory for singular solutions to the Monge-Amp\`{e}re equation with bounded right hand side, which we now describe.

Caffarelli developed a regularity theory of solutions to
$$\det D^2u = f \quad \text {in } \Omega, \quad \quad \lambda \leq f \leq \Lambda$$
at points where $u$ is strictly convex. We briefly summarize the main results. We define a section of $u$ at $x$ with height $h$ and slope $p$ by
$$S_{h,p}^u(x) = \{y \in \Omega : u(y) < u(x) + p \cdot (y-x) + h\}$$
for some subgradient $p$ at $x$.
If $u$ is strictly convex at $x$ then we can find a subgradient $p$ such that the supporting plane of this slope touches only at $x$, and then take
$h$ small enough that $S_{h,p}^u(x) \subset\subset \Omega$. In this setting, Caffarelli (\cite{C1},\cite{C2}) showed that
\begin{enumerate}
 \item $u$ is strictly convex in $S_{h,p}^u(x)$ and $u \in C^{1,\alpha}_{loc}(S_{h,p}^u(x))$,
 \item If $f \in C^{\alpha}(\Omega)$ then $u \in C^{2,\alpha}_{loc}(S_{h,p}^u(x))$, and
 \item For every $q > 1$ there is some $\epsilon(q) > 0$ such that if $|f-1| < \epsilon$ then $u \in W^{2,q}_{loc}(S_{h,p}^u(x))$.
\end{enumerate}

However, these regularity theorems fail at points where $u$ is not strictly convex. Consider the well-known Pogorelov examples on 
$B_{1} \subset \mathbb{R}^n, \quad n \geq 3$ which degenerate along $x' = (x_1,...,x_{n-1}) = 0$. One constructs these examples by seeking solutions of the form
$|x'| + |x'|^{\beta}g(x_n)$ and $|x'|^{\alpha}f(x_n)$. The first is
$$|x'| + |x'|^{n/2}(1+x_n^2),$$
which solves $\lambda \leq \det D^2u \leq \Lambda$ but is merely Lipschitz. The second is
$$|x'|^{2-2/n}(1+x_n^2),$$
which solves $\det D^2u = f$ with $f$ strictly positive and smooth, but is only $C^{1,\alpha}$ for $\alpha = 1-2/n$ and $W^{2,p}$ for $p < \frac{n(n-1)}{2}$.

In \cite{C3}, Caffarelli generalizes these examples to solutions that degenerate along subspaces of any dimension less than $\frac{n}{2}$, and shows
that it is not possible to find solutions degenerating on subspaces of dimension $\frac{n}{2}$ or higher. We provide a short proof in the next section (see Lemma \ref{MaxDimension}).
If $u$ agrees with a linear function $L$ on a $k$-dimensional set, we say that $\{u=L\}$ is a $k$-dimensional singularity.
Our proof of Theorem \ref{Main} in fact shows that the collection of $k$-dimensional singularities has Hausdorff $n-k$ dimensional measure zero (see Remark \ref{HigherDimSing}).

Since we cannot hope for $C^{1}$ regularity or $W^{2,p}$ regularity of singular solutions to $\lambda \leq \det D^2u \leq \Lambda$ for large $p$, it is natural
to ask what we can show about the integrability of the second derivatives. De Philippis, Figalli and Savin (\cite{DFS},\cite{DF}) recently 
showed $W^{2,1+\epsilon}$ regularity of strictly convex solutions to $\lambda \leq \det D^2u \leq \Lambda$, where $\epsilon$ depends only on $\lambda,\Lambda$
and $n$. Our main theorem rules out the possibility that the second derivatives concentrate on $\Sigma$:

\begin{thm}\label{W21}
 Let $u$ be an Alexandrov solution to
 $$\lambda \leq \det D^2u \leq \Lambda$$
 in $B_1 \subset \mathbb{R}^n$. Then $u \in W^{2,1}_{loc}(B_1)$.
\end{thm}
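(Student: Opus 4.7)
\medskip
\noindent\textbf{Proof proposal.} Since $u$ is convex, the distributional Hessian $D^2u$ is a matrix-valued Radon measure that is positive semidefinite in the sense that $\xi^T(D^2u)\xi \geq 0$ for every $\xi \in \mathbb{R}^n$. Polarization gives $|\partial_{ij}u| \leq \tfrac{1}{2}(\partial_{ii}u + \partial_{jj}u) \leq \Delta u$ as measures, so it suffices to prove that the scalar Radon measure $\mu := \Delta u$ is absolutely continuous with respect to Lebesgue measure on every $K \Subset B_1$. From this, a locally $L^1$ density for $D^2u$ follows at once, giving $u \in W^{2,1}_{\mathrm{loc}}(B_1)$.

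The main estimate I would establish is a linear growth bound for $\mu$ on balls: if $K \Subset B_1$ and $L$ denotes the Lipschitz constant of $u$ on a neighborhood of $K$, then
\begin{equation*}
\mu(B_r(x)) \leq L\, n\omega_n\, r^{n-1}
\end{equation*}
for all $B_r(x) \subset K$. To prove this, mollify $u$ to obtain smooth convex approximants $u_\varepsilon$ with Lipschitz constant no larger than $L$, apply the classical divergence theorem
\begin{equation*}
\int_{B_r(x)} \Delta u_\varepsilon \, dy = \int_{\partial B_r(x)} Du_\varepsilon \cdot \nu \, d\mathcal{H}^{n-1} \leq L\, n\omega_n\, r^{n-1},
\end{equation*}
and pass to the limit using weak-$*$ convergence $\Delta u_\varepsilon \rightharpoonup \mu$ together with lower semicontinuity on open sets.

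With this estimate in hand, I combine it with the two ingredients provided by the paper. First, since $\mathcal{H}^{n-1}(\Sigma) = 0$ by Theorem \ref{Main}, for every $\delta > 0$ one can cover $\Sigma \cap K$ by balls $\{B_{r_i}(x_i)\}$ with $\sum_i r_i^{n-1} < \delta$; the linear growth bound then yields $\mu(\Sigma \cap K) < CL\delta$, so $\mu(\Sigma) = 0$. Second, every $x \in B_1 \setminus \Sigma$ lies in a section $S_{h,p}^u(x) \Subset B_1$ on which $u$ is strictly convex (by Caffarelli) and hence in $W^{2,1+\varepsilon}$ by De Philippis-Figalli-Savin; thus $\mu\lfloor (B_1 \setminus \Sigma)$ is absolutely continuous with respect to Lebesgue measure. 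Decomposing $\mu = \mu_{ac} + \mu_s$, the singular part $\mu_s$ must be concentrated on $\Sigma$, but $\mu(\Sigma) = 0$ forces $\mu_s \equiv 0$, completing the proof.

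The main obstacle is the $r^{n-1}$ bound on $\mu$, which is what allows the sharp codimension-one size of $\Sigma$ from Theorem \ref{Main} to translate into absolute continuity rather than a weaker statement. Once this is in place, the $W^{2,1+\varepsilon}$ theory of \cite{DFS}, \cite{DF} can be applied as a black box on $B_1 \setminus \Sigma$, and no further interaction with Caffarelli's regularity theory at singular points is required.
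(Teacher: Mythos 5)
Your proposal is correct and follows essentially the same route as the paper: the divergence-theorem bound $\int_{B_r}\Delta u \le C\,r^{n-1}$ (with $C$ the local Lipschitz constant), combined with the $\mathcal{H}^{n-1}$-null covering of $\Sigma$ from Theorem \ref{Main} and the De Philippis--Figalli--Savin $W^{2,1+\epsilon}$ estimate on compactly contained sections away from $\Sigma$. Your framing via the Lebesgue decomposition of the measure $\Delta u$ merely makes explicit the paper's concluding statement that the second derivatives ``cannot concentrate on $\Sigma$.''
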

We also show that Theorem \ref{W21} is optimal by proving that the examples giving optimality of Theorem \ref{Main} are not in $W^{2,1+\epsilon}$ for $\epsilon$ as small as we like.

A second consequence of Theorem \ref{Main} is that the points of strict convexity for $u$ form a connected set when
$f$ is bounded away from $0$ (see Lemma \ref{Connected}). If $f$ is sufficiently regular we obtain unique continuation for the Monge-Amp\`{e}re equation:

\begin{thm}\label{UniqueContinuation}
 Assume that $u$ and $v$ are Alexandrov solutions to
 $$\det D^2u = \det D^2v = f$$
 in an open connected set $\Omega \subset \mathbb{R}^n$, with $f \in C^{1,\alpha}(\Omega)$ strictly positive.
 If $u = v$ on an open subset of $\Omega$, then $u \equiv v$ in $\Omega$.
\end{thm}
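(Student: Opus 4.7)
The plan is to reduce the claim to linear unique continuation on the common set of strict convexity. Let $\Sigma_u, \Sigma_v$ denote the singular sets of $u$ and $v$ given by Theorem~\ref{Main}, and set $G := \Omega \setminus (\Sigma_u \cup \Sigma_v)$. By Theorem~\ref{Main}, $\mathcal{H}^{n-1}(\Sigma_u \cup \Sigma_v) = 0$, so $G$ is open and dense in $\Omega$, and moreover connected: an $\mathcal{H}^{n-1}$-null set cannot disconnect an open connected subset of $\mathbb{R}^n$ (alternatively one can invoke Lemma~\ref{Connected} applied to each of $u$ and $v$).

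On $G$ both $u$ and $v$ are strictly convex, so Caffarelli's interior regularity combined with Schauder estimates, together with the hypothesis that $f \in C^{1,\alpha}(\Omega)$ is strictly positive, yields $u, v \in C^{3,\alpha}_{\mathrm{loc}}(G)$ with $D^2 u, D^2 v$ locally uniformly positive definite and satisfying $\det D^2u = \det D^2v = f$ classically. Writing $w := u - v$ and integrating the derivative of $t \mapsto \det(D^2 v + t\,D^2 w)$ from $0$ to $1$,
\begin{equation*}
0 \;=\; \det D^2 u - \det D^2 v \;=\; a^{ij}(x)\, w_{ij}(x), \qquad a^{ij}(x) := \int_0^1 \mathrm{cof}\bigl(D^2 v + t\,D^2 w\bigr)^{ij}\, dt.
\end{equation*}
The coefficient matrix $a^{ij}$ lies in $C^{1,\alpha}_{\mathrm{loc}}(G)$, and on every compact subset of $G$ the equation is uniformly elliptic, since the integrand is a convex combination of cofactor matrices of positive definite Hessians whose determinants are bounded below by $\min f > 0$.

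I then finish with an open-and-closed argument on the connected set $G$. Let $E := \{x \in G : w \equiv 0 \text{ in a neighborhood of } x\}$. By hypothesis there is a nonempty open $U_0 \subset \Omega$ on which $u = v$; density of $G$ gives $G \cap U_0 \subset E \ne \emptyset$. Clearly $E$ is open. To see that $E$ is closed in $G$, let $x_k \in E$ with $x_k \to x_\infty \in G$, and pick a ball $B_r(x_\infty) \Subset G$; on this ball $w$ solves a linear uniformly elliptic equation with Lipschitz (in fact $C^{1,\alpha}$) coefficients and vanishes on a small ball around some $x_k$, so Aronszajn's strong unique continuation theorem for second-order elliptic operators yields $w \equiv 0$ on $B_r(x_\infty)$, hence $x_\infty \in E$. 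Connectedness of $G$ then forces $E = G$, so $u \equiv v$ on $G$, and continuity of $u$ and $v$ together with density of $G$ extends this to $u \equiv v$ on all of $\Omega$.

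The main obstacle is arranging for the linearized equation to be regular and uniformly elliptic on a set large enough to run Aronszajn. This is where Theorem~\ref{Main} is essential: it supplies the dense connected set $G$ on which both solutions are simultaneously strictly convex; on $G$, Caffarelli's theory combined with the $C^{1,\alpha}$ regularity of $f$ promotes $u, v$ to $C^{3,\alpha}$ classical solutions, and strict positivity of $f$ keeps the linearized coefficients $a^{ij}$ positive definite on compact subsets, so that the hypotheses of Aronszajn's theorem are available exactly where we need them.
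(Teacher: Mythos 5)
Your proposal is correct and follows essentially the same route as the paper: restrict to the open, dense, connected set of simultaneous strict convexity (connectedness via Theorem \ref{Main} and Lemma \ref{Connected}), upgrade $u,v$ to $C^{3,\alpha}$ classical solutions there, linearize the difference via the cofactor integral, and invoke unique continuation for linear elliptic equations with Lipschitz coefficients. The only cosmetic difference is that you run an explicit open-and-closed argument from the local (Aronszajn) statement, whereas the paper directly cites the global unique continuation theorem of Garofalo--Lin on the connected set.
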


To our knowledge, these are the first Sobolev regularity and unique continuation results for singular solutions to the Monge-Amp\`{e}re equation.

The paper is organized as follows. In Section \ref{Prelims} we present basic geometric properties of the sections of solutions to $\det D^2u \geq 1$. In particular, we present
an important estimate on the volume growth of sections that are not compactly contained and relate the volume of compactly contained sections
to the Monge-Amp\`{e}re mass of these sections. 
In Section \ref{PfofMain} we use these results at singular points together with the useful technique of replacing $u$ by $u + \frac{1}{2}|x|^2$ 
to prove Theorem \ref{Main}. In Section \ref{Examples} we construct, for any $\delta$, a solution to $\det D^2u = 1$
with a singular set of Hausdorff dimension $n-1-\delta$, which shows that our main theorem is optimal. In Section \ref{W21Section} we use Theorem \ref{Main} to
prove Theorem \ref{W21} and we show that the examples constructed in Section \ref{Examples} are not in $W^{2,1+\epsilon}$ for $\epsilon$ as small as we like,
which shows that $W^{2,1}$ regularity is optimal. Finally, in Section \ref{UCsection} we prove Theorem \ref{UniqueContinuation} by applying
a classical unique continuation theorem in the set of strict convexity.

In future work we intend to present a more precise, quantitative version of our main theorem to obtain $L\log L$ estimates
for the second derivatives of singular solutions to $\lambda \leq \det D^2u \leq \Lambda$.

%%%%%%%%%%%%%%%%%%%%%%%%%%%%%%%%%%%%%%%%%%%%%%%%%%%%%%%%%%%%%%%%%%%%%%%%%%%%%%%%%%%%%%%%%%%%%%%%%%%%%%%%%%%%
\section{Preliminaries}\label{Prelims}

We first recall the precise definition of Alexandrov solutions. Any convex function $v : \Omega \subset \mathbb{R}^n \rightarrow \mathbb{R}$ has an associated Borel measure $Mv$, called the 
Monge-Amp\`{e}re measure, defined by
$$Mv(A) = |\partial v(A)|$$
where $|\partial v(A)|$ represents the Lebesgue measure of the image of the subgradients of $v$ in $A$ (see \cite{Gut}). (We say $p \in \mathbb{R}^n$ is 
a subgradient of $v$ at $x$ if it is the slope of some supporting hyperplane to the graph of $v$ at $x$). If $v \in C^2,$ then
$$|\partial v(A)| = \int_{A} \det D^2v \, dx.$$
Given a Borel measure $\mu$, we say that $v$ is an Alexandrov solution to
$$\det D^2v = \mu$$
if $Mv = \mu$.

For a convex function $v$ defined on $\Omega \subset \mathbb{R}^n$, we define a section $S_{h,p}^v(x)$ by
$$S_{h,p}^v(x) = \{y \in \Omega: v(y) < v(x) + p \cdot(y-x) + h\}$$
for some subgradient $p$ at $x$. We now present some results on the geometry of the sections.

\begin{lem}\label{JohnsLemma}
 (John's Lemma). If $K \subset \mathbb{R}^n$ is a bounded convex set with nonempty interior, and $0$ is the center of mass of $K$, then there exists
 an ellipsoid $E$ and a dimensional constant $C(n)$ such that
 $$E \subset K \subset C(n)E.$$ 
\end{lem}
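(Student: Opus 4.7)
The plan is to invoke the classical theorem of John on maximum-volume inscribed ellipsoids, together with the affine invariance of the statement. Let $E^{*}$ denote the (unique) ellipsoid of maximum volume contained in $K$; existence follows from a standard compactness argument on the parameter space of ellipsoids, and uniqueness from strict log-concavity of the volume functional in suitable coordinates. Since both the hypotheses and the conclusion of the lemma are invariant under invertible affine transformations of $\mathbb{R}^{n}$, I may apply such a transformation and assume $E^{*} = B_{1}(c^{*})$ for some $c^{*} \in K$. The key content of John's theorem is then the inclusion $K \subset B_{n}(c^{*})$; granting this, setting $E := B_{1}(c^{*})$ yields the conclusion with $C(n) = n$.

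To prove $K \subset B_{n}(c^{*})$, I would argue by contradiction: suppose there exists $p \in K$ with $|p - c^{*}| > n$. By convexity, $K \supset \operatorname{Conv}\bigl(B_{1}(c^{*}) \cup \{p\}\bigr)$, and a direct optimization exhibits, inside this convex hull, an ellipsoid of volume strictly greater than $|B_{1}(c^{*})|$: stretch $B_{1}(c^{*})$ by a factor $1 + \alpha\varepsilon$ along the direction $\hat{v} := (p - c^{*})/|p - c^{*}|$ and contract by $1 - \beta\varepsilon$ in the $(n-1)$ orthogonal directions, choosing $\alpha, \beta > 0$ so that the perturbed ellipsoid still lies in the ice-cream cone while $\alpha > (n-1)\beta$. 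This contradicts the volume-maximality of $E^{*}$.

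The centroid hypothesis plays no role in the existence of $E$ itself, but it allows one to locate $E$ near the origin in applications: since $0 \in K \subset B_{n}(c^{*})$ one obtains $|c^{*}| \leq n$, so that $E$ is a unit ball within bounded distance of the origin. The main obstacle is the perturbation step in the second paragraph: one must verify that when $|p - c^{*}| > n$ the geometric room available in the ice-cream cone is enough to let the elongation gain in volume outweigh the transverse losses, a second-order computation with the determinant of the perturbed axis matrix; the threshold $n$ is precisely the critical value separating the regimes in which such an elongated ellipsoid does or does not fit.
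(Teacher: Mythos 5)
The paper offers no proof of this lemma at all --- it is quoted as a classical result (it appears, e.g., in Gutierrez's book cited as \cite{Gut}) --- so there is nothing in-paper to compare against, and your argument must stand on its own. Your route, via the maximal-volume inscribed ellipsoid $E^*$ and a volume-increasing perturbation inside the ice-cream cone, is the standard proof of John's theorem and is the right general idea. But the perturbation as you describe it does not work: if you stretch $B_1(c^*)$ by $1+\alpha\varepsilon$ along $\hat{v}$ \emph{about its center} and contract transversally, the antipodal point $c^* - (1+\alpha\varepsilon)\hat{v}$ leaves the cone, since the convex hull of $B_1(c^*)$ and $p$ is contained in the half-space $\{\langle x - c^*,\hat{v}\rangle \geq -1\}$. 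No choice of $\alpha,\beta>0$ rescues a centered stretch; the standard construction must also translate the center by $O(\varepsilon)$ toward $p$ (equivalently, anchor the back point of the ball), after which the first-order volume computation does yield a gain precisely when $|p-c^*|>n$.

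The more consequential gap concerns the centering. The lemma is applied in Lemmas \ref{SectionGrowth} and \ref{Alexandrov} through a \emph{linear} (not affine) normalizing map $A$ with $A(B_1)=E$ and $B_1 \subset A^{-1}(K) \subset B_{C(n)}$, which forces $E$ to be centered at the center of mass $0$ and $C(n)E$ to be the dilation about $0$. Your $E = B_1(c^*)$ is centered at the John center, and the observation $|c^*|\leq n$ gives neither $K \subset C(n)E$ for the dilation about the origin nor a ball $B_{c(n)}(0)\subset K$: from your inclusions alone, $0$ could a priori lie on $\partial B_1(c^*)$ and arbitrarily close to $\partial K$. The missing ingredient is the quantitative interiority of the centroid --- for instance the classical fact that $-K \subset nK$ when the centroid of $K$ is $0$ --- which combined with $B_1(c^*)\subset K$ places a ball of definite radius about the origin inside $K$ and yields the centered statement $E \subset K \subset C(n)E$ with a somewhat worse dimensional constant (of order $n^{3/2}$ or $n^2$, which is all the paper requires). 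Alternatively, one can run your maximization over ellipsoids constrained to be centered at $0$ from the start, but then the perturbation argument must be redone under that constraint.
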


We call $E$ the John ellipsoid of $K$. There is some linear transformation $A$ such that $A(B_1) = E$, and we say that $A$ normalizes
$K$.

The next lemma is an important observation about the volume growth of sections which may not be compactly contained in $\Omega$:

\begin{lem}\label{SectionGrowth}
 Assume that $\det D^2u \geq 1$ in a bounded domain $\Omega \subset \mathbb{R}^n$. Then if $S_{h,p}^u(x)$ is any section of $u$, we have
 $$|S_{h,p}^u(x)| \leq Ch^{n/2}$$
 for some constant $C$ depending only on $n$.
\end{lem}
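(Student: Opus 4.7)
The plan is to reduce to a normalized section via John's lemma and then to bound the Monge--Amp\`ere mass from above using the observation that a convex function with oscillation at most $1$ on $B_1$ has subgradients of size at most $2$ on $B_{1/2}$. Replacing $u$ by $w(y)=u(y)-u(x)-p\cdot(y-x)$, I obtain a convex function with $w(x)=0$, $w\ge 0$, $\det D^2 w\ge 1$, and $S=S_{h,p}^u(x)=\{w<h\}\cap\Omega$. I will assume $S$ is a bounded open convex set with nonempty interior; the other cases either give $|S|=0$ directly or reduce to this one by restricting to the connected component through~$x$.

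Let $z_c$ denote the center of mass of $S$. John's lemma applied to $S-z_c$ provides a linear map $A$ with $A(B_1)$ equal to the John ellipsoid of $S-z_c$; writing $K:=A^{-1}(S-z_c)$, one has $B_1\subset K\subset C(n)B_1$. Setting $\hat w(z):=h^{-1}w(Az+z_c)$ gives a convex function on $K$ with $0\le\hat w<1$ and $\det D^2\hat w\ge (\det A)^2/h^n$.

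The key step is the gradient bound $\partial\hat w(B_{1/2})\subset B_2$. For any $q\in\partial\hat w(y_0)$ with $y_0\in B_{1/2}$ and any unit vector $e$, the point $y_0+\tfrac12 e$ lies in $B_1\subset K$, so the supporting plane inequality together with $0\le\hat w<1$ yields $\tfrac12\, q\cdot e\le\hat w(y_0+\tfrac12 e)-\hat w(y_0)<1$, hence $|q|<2$. Consequently
$$(\det A)^2 h^{-n}|B_{1/2}|\le M\hat w(B_{1/2})=|\partial\hat w(B_{1/2})|\le |B_2|,$$
which rearranges to $\det A\le C(n)\,h^{n/2}$, and therefore $|S|=\det A\cdot |K|\le C(n)\,h^{n/2}\cdot|B_{C(n)}|\le C'(n)\,h^{n/2}$. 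There is no deep obstacle: the whole content of the argument is choosing the John normalization, after which the fixed containment $B_1\subset K$ and the normalized bound $\hat w<1$ automatically force the Monge--Amp\`ere mass on $B_{1/2}$ to be bounded by a dimensional constant, which is exactly what is needed to convert the lower bound on $\det D^2\hat w$ into the desired upper bound on $\det A$, i.e.\ on $|S|$.
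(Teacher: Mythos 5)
Your argument is correct. It shares the paper's first and essential step---normalizing the section by its John ellipsoid so that the problem reduces to bounding $|\det A|$ by $C(n)h^{n/2}$---but the mechanism you use to get that bound is different. The paper rescales by $|\det A|^{-2/n}$ so as to preserve the inequality $\det D^2\tilde u\ge 1$, and then invokes the comparison principle with the explicit barrier $\tfrac12(|x|^2-1)$ on $B_1$ to force $|\min\tilde u|\ge\tfrac12$, which translates back into $|\det A|\le (2h)^{n/2}$. You instead normalize the height to $1$, observe that a nonnegative convex function bounded by $1$ on $B_1\subset K$ has subgradients of size at most $2$ on $B_{1/2}$, and play the resulting upper bound $M\hat w(B_{1/2})\le|B_2|$ against the lower bound $M\hat w(B_{1/2})\ge (\det A)^2h^{-n}|B_{1/2}|$ coming from the equation (which, in the Alexandrov setting, follows from the transformation rule $|\partial\hat w(E)|=h^{-n}|\det A|\,|\partial w(AE+z_c)|$ --- worth saying explicitly, since $\hat w$ need not be $C^2$). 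The two routes are dual in spirit: the paper's barrier argument says a too-large section would force the solution to dip below its height, while your mass-counting argument says a too-large section would force more Monge--Amp\`ere mass than a Lipschitz bound allows. Both are elementary and yield the same dimensional constant up to normalization; the barrier version is slightly shorter, while yours has the minor advantage of not needing to verify that the comparison principle applies to Alexandrov solutions.
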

\begin{proof}
 Assume by translation that $0$ is the center of mass of $S_{h,p}^u(x)$. By subtracting a linear function we can assume that 
 $$p = 0, \quad u|_{\partial S_{h,0}^u(x)} \leq 0, \text { and } \quad |\min_{S_{h,0}^u(x)}u| = h.$$ 
 By John's Lemma, there is a linear transformation $A$ that normalizes $S_{h,0}^u(x)$. Let
 $$\tilde{u}(x) = |\det A|^{-2/n}u(Ax).$$
 It is easy to check that 
 $$\det D^2 \tilde{u} \geq 1, \quad \tilde{u}|_{\partial \tilde{\Omega}} \leq 0$$
 where $B_1 \subset \tilde{\Omega} \subset B_{C(n)}$. Then $\frac{1}{2}(|x|^2 - 1)$ is an upper barrier for $\tilde{u}$, so
 $$|\min_{\tilde{\Omega}}\tilde{u}| \geq \frac{1}{2}.$$
 Since $|\det A| \geq c(n)|S_{h,0}^u(x)|$, the conclusion follows.
\end{proof}

Caffarelli proved the next proposition in \cite{C3}. We provide a short proof using a technique related to our proof of the main theorem.
\begin{lem}\label{MaxDimension}
 Assume 
 $$\det D^2u \geq 1$$
 in $B_1 \subset \mathbb{R}^n$. Then $u$ cannot vanish on a subspace of dimension $\frac{n}{2}$ or higher.
\end{lem}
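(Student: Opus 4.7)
The plan is to derive a contradiction by comparing the section volume upper bound from Lemma \ref{SectionGrowth} with a convexity-based lower bound. Suppose for contradiction that $u$ vanishes on $V \cap B_1$ for some linear subspace $V$ of dimension $k \geq n/2$, and write $x = (x', x'') \in \mathbb{R}^{n-k} \times \mathbb{R}^k$ with $V = \{x' = 0\}$. Any subgradient $p \in \partial u(0)$ must satisfy $p \cdot y \leq u(y) = 0$ for all $y \in V \cap B_1$; since $V \cap B_1$ is symmetric about $0$, this forces $p \perp V$. Hence $w(x) := u(x) - p \cdot x$ is convex, nonnegative on $B_1$, vanishes identically on $V \cap B_1$, and still satisfies $\det D^2 w \geq 1$.

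I would then bound the sections $S_h := \{w < h\} \cap B_1$ from two sides. Lemma \ref{SectionGrowth} directly gives the upper bound $|S_h| \leq C(n) h^{n/2}$. For the lower bound, set $M := \sup_{B_1} w$. For each $x_0'' \in V \cap B_{1/2}$ and each unit vector $v' \in V^\perp$, the convex one-variable function $t \mapsto w(tv', x_0'')$ vanishes at $t = 0$ and is bounded by $M$ at $t = \sqrt{1 - |x_0''|^2} \geq \sqrt{3}/2$, so the chord inequality for convex functions yields $w(tv', x_0'') \leq 2Mt$. Hence $S_h$ contains the cylindrical tube $T_h := \{(x', x'') \in B_1 : |x''| < 1/2,\ |x'| < h/(2M)\}$, so $|S_h| \geq |T_h| \geq c(n)(h/M)^{n-k}$.

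Combining the two bounds gives $h^{(n-2k)/2} \leq C'(n)\, M^{n-k}$. For $k > n/2$ the exponent on the left is strictly negative, so letting $h \to 0^+$ with $M$ fixed and finite produces a contradiction; in particular the lemma is immediate for odd $n$. The main obstacle is the borderline case $k = n/2$ (possible only for even $n$), where the inequality degenerates to $M \geq c(n) > 0$ and no direct contradiction arises. To close this case I would run the same analysis on the perturbation $\bar w := w + \tfrac{1}{2}|x|^2$, which is the ``technique related to our proof of the main theorem'' alluded to in the excerpt: since $\det(A+I) \geq 1 + \det A$ for $A \succeq 0$, one has the improved Monge-Amp\`ere bound $\det D^2 \bar w \geq 2$, and $\bar w|_V = \tfrac{1}{2}|x''|^2$ is strictly convex along $V$. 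The strict convexity localizes the base of $\{\bar w < h\}$ to a $k$-disk of radius $\sqrt{2h}$ rather than a disk of fixed size, and combining this geometric refinement with a John-ellipsoid rescaling (Lemma \ref{JohnsLemma}) should produce the strict inequality needed to also rule out $k = n/2$.
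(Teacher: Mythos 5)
Your argument correctly handles $k > n/2$: the comparison between the Lemma~\ref{SectionGrowth} upper bound $|S_h| \leq C(n)h^{n/2}$ and the Lipschitz/chord lower bound $|S_h| \geq c(n)(h/M)^{n-k}$ is exactly the mechanism the paper uses, and you rightly identify $k = n/2$ as the borderline case where this naive volume count degenerates. However, your proposed fix for $k = n/2$ via the perturbation $\bar w = w + \tfrac{1}{2}|x|^2$ does not work, and in fact moves the estimate in the wrong direction.

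Here is the problem. Adding the quadratic shrinks the section in the directions along $V$: since $\bar w|_V = \tfrac{1}{2}|x''|^2$, the section $\{\bar w < h\}$ has extent only $\sim \sqrt{h}$ in the $k$ directions of $V$, whereas $\{w < h\}$ had extent $O(1)$ there. Your lower bound therefore drops from $c\,h^{n-k}$ to $c\,h^{k/2}\cdot h^{n-k} = c\,h^{n - k/2}$. The upper bound from Lemma~\ref{SectionGrowth} is still $Ch^{n/2}$ (the improved constant from $\det D^2\bar w \geq 2$ changes nothing qualitatively, and John's lemma cannot alter the exponent). Comparing now gives $h^{n - k/2} \leq C h^{n/2}$, i.e. $h^{(n-k)/2} \leq C$, which is satisfied trivially as $h \to 0$ for all $k < n$. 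So the quadratic perturbation not only fails to close $k = n/2$; it destroys even the $k > n/2$ contradiction. (The quadratic trick is the right tool for Theorem~\ref{Main}, where one needs the sections of $v = u + \tfrac{1}{2}|x|^2$ to be compactly contained and of diameter $\lesssim \sqrt{h}$, but it plays no role in the paper's proof of this lemma.)

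The missing ingredient is a better choice of the linear function to subtract. You subtract $p\cdot x$ for an arbitrary subgradient $p$, which only ensures $w \geq 0$; this gives the Lipschitz estimate $w(te) \leq Ct$ and hence a section length of order $h$ in each $V^{\perp}$ direction, with no extra gain. The paper instead subtracts a linear function (with slope on the relative boundary of $\partial u(0)$ in $V^{\perp}$) chosen so that in at least one transverse direction, say $e_n$, the resulting function satisfies $u(te_n) = o(t)$. This sublinear growth means the section $S_{h,0}^u(0)$ has length $R(h)\,h$ in the $e_n$ direction with $R(h) \to \infty$ as $h \to 0$. The volume lower bound then becomes $|S_{h,0}^u(0)| \geq c\,R(h)\,h^{n-k}$, and combining with the upper bound yields $R(h) \leq C h^{k - n/2}$. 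For $k \geq n/2$ the right side is bounded (indeed bounded by $C$ when $k = n/2$), contradicting $R(h) \to \infty$. This extra diverging factor is exactly what handles the borderline case, and it is the step your proposal is missing.
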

\begin{proof}
 Suppose $u$ vanishes on
 $$\{x_{k+1} =...=x_n=0\} \cap B_1.$$ 
 By subtracting a linear function of the form $a_{k+1}x_{k+1} + ... + a_nx_n$ we may assume that
 $u(te_n) = o(t)$. Then $S_{h,0}^u(0)$ has length $R(h)h$ in the $e_n$ direction, where $R(h) \rightarrow \infty$ as $h \rightarrow 0$. Furthermore,
 $S_{h,0}^u(0)$ has length exceeding $\frac{1}{C}h$ in the $e_{n-k},...,e_{n-1}$ directions, where $C$ is the Lipschitz constant of $u$ in $B_{1/2}$.
 Finally, $S_{h,0}^u(0)$ contains the unit ball in the subspace spanned by $\{e_1,...,e_k\}$. We conclude that
 $$|S_{h,0}^u(0)| \geq C^{-k}R(h)h^{n-k},$$
 which contradicts Lemma \ref{SectionGrowth} as $h \rightarrow 0$ for $k \geq \frac{n}{2}$.    
\end{proof}

\begin{rem}
 Lemma \ref{MaxDimension} implies in particular that every solution to $\det D^2u \geq 1$ in two dimensions is strictly convex. Furthermore, it follows
 easily that any solution to $\det D^2u \geq 1$ on some domain in $\mathbb{R}^n$ cannot agree with a linear function $l$ on any set of affine dimension $k \geq \frac{n}{2}$.
 Indeed, if not we could subtract $l$, find some point in the ($k$-dimensional) interior of $\{u=0\}$, translate to $0$ and rescale to get into the 
 setting of Lemma \ref{MaxDimension}.
\end{rem}

We conclude the section with the following variant of Alexandrov's maximum principle. In the following $c(n),C(n)$ denote small and large constants
depending only on $n$, and their values may change from line to line.

\begin{lem}\label{Alexandrov}
 Let $v$ be any convex function on a bounded domain $\Omega \subset \mathbb{R}^n$ with $v|_{\partial \Omega} = 0$. Then
 $$Mv(\Omega)\,|\Omega| \geq c(n)|\min_{\Omega}v|^n.$$
\end{lem}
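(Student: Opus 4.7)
The plan is to prove this via the standard ``dual cone'' trick, combined with a normalization by John's lemma (Lemma \ref{JohnsLemma}). Both sides of the inequality turn out to be affine invariants, so the content reduces to the case of a well-rounded domain.

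First I would verify the invariance of the inequality under affine changes of coordinates. If $\tilde{v}(y) = v(Ay+b)$ with $\tilde{\Omega} = A^{-1}(\Omega - b)$, a direct check using the definition of subgradient gives $\partial \tilde{v}(y) = A^{T}\partial v(Ay+b)$, hence $M\tilde{v}(\tilde{\Omega}) = |\det A|\,Mv(\Omega)$, while $|\tilde{\Omega}| = |\det A|^{-1}|\Omega|$. The product $Mv(\Omega)\,|\Omega|$ and the quantity $|\min v|$ are therefore invariant. Assuming $\Omega$ convex (the relevant case in the sequel, e.g.\ for sections), John's Lemma applied to $\Omega$ produces an affine map after which we may assume $B_1 \subset \Omega \subset B_{C(n)}$.

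In this normalized picture, let $x_0 \in \Omega$ be a point with $v(x_0) = -m = \min_\Omega v$. For each slope $p \in \mathbb{R}^n$ with $|p|\leq m/(2C(n))$, the affine function
\[
\ell_p(x) = -m + p\cdot(x-x_0)
\]
satisfies $\ell_p(x) \leq -m + |p|\cdot\mathrm{diam}(\Omega) \leq 0 = v(x)$ for every $x \in \partial\Omega$. Since $v-\ell_p$ is continuous, nonnegative on $\partial\Omega$, and vanishes at $x_0 \in \Omega$, its minimum is attained at an interior point $y^\ast$, and there $p \in \partial v(y^\ast)$. Hence $\partial v(\Omega) \supset B_{m/(2C(n))}$, giving $Mv(\Omega) \geq c(n)\,m^n$. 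Combining this with $|\Omega| \geq |B_1| = c'(n)$ yields the inequality in the normalized setting, and by the invariance above it passes back to the original $v$ and $\Omega$.

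The only real delicacy is the assumption that $\Omega$ is convex, which is needed to apply John's lemma. In the applications that concern us $\Omega$ is a sublevel set of a convex function and is therefore automatically convex; for a general bounded domain one could instead work with the convex hull of $\Omega$ and extend $v$ by its convex envelope (which does not decrease the minimum and does not increase the Monge--Amp\`ere mass, since the envelope is affine outside $\Omega$). Beyond this point, the argument is entirely routine.
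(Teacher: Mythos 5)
Your proof is correct and follows essentially the same route as the paper: normalize $\Omega$ by John's lemma, then run Aleksandrov's argument to show the subgradient image contains a ball of radius comparable to $|\min_\Omega v|$. The paper phrases this last step as a comparison with the cone over $\partial B_{C(n)}$ with vertex at the minimum point, which is just the standard packaging of your tilting-plane argument, so the two proofs coincide in substance.
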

\begin{proof}
 By translation assume that the center of mass of $\Omega$ is $0$. Let $A$ normalize $\Omega$ and let
 $$\tilde{v}(x) = (\det A)^{-2/n}v(Ax).$$
 Then
 $$M\tilde{v}(\tilde{\Omega}) = (\det A)^{-1}Mv(\Omega)$$
 with $B_1 \subset \tilde{\Omega} \subset B_{C(n)}$.

 The maximum of $|\tilde{v}|$ is achieved at some point $\tilde{x} \in \tilde{\Omega}$. Let $K$ be the function whose graph is the cone generated by
 $(\tilde{x},\tilde{v}(x))$ and $\partial B_{C(n)}$. By convexity,
 $$M\tilde{v}(\tilde{\Omega}) \geq |\partial K(\tilde{x})|.$$
 Since $\partial K(\tilde{x})$ contains a ball of radius at least $c(n)|\min_{\tilde{\Omega}}\tilde{v}|$, we have
 $$|\partial K(\tilde{x})| \geq c(n)|\min_{\tilde{\Omega}}\tilde{v}|^n \geq c(n)|\det A|^{-2} |\min_{\Omega}v|^n.$$
 Finally, $|\det A| \leq C(n) |\Omega|$ so the conclusion follows.
\end{proof}

%%%%%%%%%%%%%%%%%%%%%%%%%%%%%%%%%%%%%%%%%%%%%%%%%%%%%%%%%%%%%%%%%%%%%%%%%%%%%%%%%%%%%%%%%%%%%%%%%%%%%%%%%%%%
\section{Proof of Theorem \ref{Main}}\label{PfofMain}
In this section assume that
$$\det D^2u \geq 1$$ 
in $B_1 \subset \mathbb{R}^n$. Fix $x \in \Sigma$ and a subgradient $p$ at $x$. By translation and subtracting a linear function assume that $x = p = 0$.
Then $\{u=0\}$ contains a line segment of some length $l$.
By Lemma \ref{SectionGrowth},
$$|S_{h,0}^u(0)| \leq C(n)h^{n/2}$$
for all $h > 0$.

Letting $v = u + \frac{1}{2}|x|^2$, it follows that 
$$|S_{h,0}^v(0)| \leq \frac{C(n)}{l}h^{\frac{n+1}{2}}$$
for all $h$ small. In fact, for any $x_0 \in \Sigma$ and subgradient $p_0$ to $v$ at $x_0$ we have
$$|S_{h,p_0}^v(x_0)| < Ch^{\frac{n+1}{2}}$$
for some $C$ which may depend on $x_0$ and $p_0$. Indeed, $p_0$ can be written as $p + x_0$ for some subgradient $p$ of $u$ at $x_0$, and one 
easily checks that
$$S_{h,p_0}^v(x_0) = S_{h,p}^{u+\frac{1}{2}|x-x_0|^2}(x_0),$$
so by subtracting a linear function with slope $p$ and translating we are in the situation described above.

Theorem \ref{Main} thus follows from the following more general result:

\begin{thm}\label{General}
 Let $v$ be any convex function on $B_1 \subset \mathbb{R}^n$ with sections $S_{h,p}^v$, and let $\Sigma_v$ denote the set of points $x$ such that
 for all subgradients $p$ at $x$, there is some $C_{x,p}$ such that
 $$|S_{h,p}^v(x)| < C_{x,p}h^{\frac{n+1}{2}}$$
 for all $h$ small. Then
 $$\mathcal{H}^{n-1}(\Sigma_v) =  0.$$
\end{thm}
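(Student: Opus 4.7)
The plan is to establish $\mathcal{H}^{n-1}(\Sigma_v) = 0$ via a Vitali-type covering argument at small scales, combined with the Alexandrov maximum principle from Lemma~\ref{Alexandrov} and John's lemma.

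First I would reduce to a uniform setting. Write $\Sigma_v = \bigcup_k \Sigma_{v,k}$, where $\Sigma_{v,k}$ consists of those $x \in \Sigma_v \cap B_{1-1/k}$ such that $|S_{h,p}^v(x)| \leq k h^{(n+1)/2}$ for every $p \in \partial v(x)$ and every $h \in (0, 1/k]$. Using upper semicontinuity of the subdifferential, compactness of each $\partial v(x)$, and continuity of the section volume in $(x,p)$, this decomposition exhausts $\Sigma_v$, so it suffices to show $\mathcal{H}^{n-1}(\Sigma_{v,k}) = 0$ for each fixed $k$. One quickly checks that the ``for every $p$'' condition forces $\partial v(x)$ to be a single point for $x\in\Sigma_v$, so we are working on the differentiability set of $v$ and may take $p(x) = \nabla v(x)$.

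Fix $k$ and a small $h > 0$. For each $x \in \Sigma_{v,k}$ set $S_h(x) := S_{h,p(x)}^v(x)$. Applying Lemma~\ref{Alexandrov} to $v - L_{p(x)} - h$ on $S_h(x)$ (which vanishes on the boundary and has minimum $-h$) yields the Monge--Amp\`{e}re lower bound $Mv(S_h(x)) \geq c h^n / |S_h(x)| \geq (c/k)\,h^{(n-1)/2}$. Extracting a disjoint Vitali-type subfamily $\{S_i\}$ (working with the John ellipsoids approximating the sections so that Vitali applies), additivity of $Mv$ together with the local Lipschitz bound on $v$ (giving $Mv(B_{3/4}) \leq C$) bounds the cardinality as $N_h \leq C k h^{-(n-1)/2}$, while suitable dilates still cover $\Sigma_{v,k}$. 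Moreover, John's lemma applied to each $S_i$ furnishes semi-axes $a_1 \leq \cdots \leq a_n$ with $\prod_j a_j \leq Ckh^{(n+1)/2}$; the Lipschitz bound forces $a_1 \geq ch$, whence $a_2\cdots a_n \leq Ch^{(n-1)/2}$, so each section is thin in precisely one direction.

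The main obstacle is to convert these estimates into a \emph{vanishing} bound on $\mathcal{H}^{n-1}(\Sigma_{v,k})$ rather than a merely finite one. A naive sum $\sum_i (\mathrm{diam}\,S_i)^{n-1}$ with $\mathrm{diam}\, S_i \lesssim a_n$ only yields a constant. The vanishing has to come from a finer geometric argument --- I expect, by exploiting that the thin direction of $S_h(x)$ aligns with a normal to $\Sigma_{v,k}$ at $x$, so that the portion of $\Sigma_{v,k}$ lying inside $S_i$ has $(n-1)$-content controlled by the ``flat-face'' product $a_2\cdots a_n \leq Ch^{(n-1)/2}$ rather than by $(\mathrm{diam}\, S_i)^{n-1}$. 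Such alignment should follow from a blow-up argument: as $h \to 0$, the thin direction of $S_h(x)$ must converge to a normal of an approximate tangent hyperplane to $\Sigma_{v,k}$ at $x$, since otherwise one could cover a positive $(n-1)$-piece of $\Sigma_{v,k}$ using $o(1/h^{(n-1)/2})$ sections, violating the Alexandrov lower bound above. Summing $a_2\cdots a_n$ over the efficient cover and letting $h \to 0$ then yields $\mathcal{H}^{n-1}(\Sigma_{v,k}) = 0$, as required.
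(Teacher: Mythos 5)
Your first half is sound and matches the paper's starting point: Lemma \ref{Alexandrov} applied to $v-L_{p(x)}-h$ on a section gives $Mv(S_h(x))\ge c\,h^n/|S_h(x)|\ge c\,h^{(n-1)/2}$, and a Vitali argument against the local mass bound yields a cover of $\Sigma_{v}$ of cardinality $O(h^{-(n-1)/2})$ by sets of diameter $O(h^{1/2})$ --- hence finite $(n-1)$-content, but not zero. Two technical points even here: Alexandrov's estimate needs $S_h(x)\subset\subset B_1$, and you need the diameter bound $Ch^{1/2}$; neither holds for a general convex $v$ (think of $v=|x_n|$, whose sections are slabs reaching $\partial B_1$). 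The paper secures both by first replacing $v$ with $v+\frac12|x|^2$, which only shrinks sections and hence enlarges $\Sigma_v$. Also, your claim that the ``for all $p$'' condition forces $\partial v(x)$ to be a singleton is neither justified nor needed.

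The genuine gap is exactly the step you flag with ``I expect'' and ``should follow'': the passage from finite to vanishing $\mathcal{H}^{n-1}$ measure. Your proposed mechanism --- that the unique thin direction of $S_h(x)$ aligns with a normal to an approximate tangent hyperplane of $\Sigma_{v,k}$ --- presupposes a rectifiable structure on $\Sigma_{v,k}$ that is not available a priori, and no argument is offered for the alignment or for summing the ``flat-face'' areas. The paper's mechanism is different, and it crucially uses the hypothesis for \emph{tilted} subgradients, which your hard step never invokes. After the reduction to $v=v_0+\frac12|x|^2$, Lemma \ref{NoQuadraticGrowth} shows $d_{n-1}(h)/h^{1/2}\to 0$: the section is degenerate in at least \emph{two} directions, not ``precisely one'' as you conclude. (If it were thin in only one direction, $v$ would be trapped between $\pm c\,x_n$ there, and tilting the supporting slope to the extremal value $ae_n$ produces a section containing a cone of height $R_kh_k$ with $R_k\to\infty$ over an $(n-1)$-ball of radius $\sim h_k^{1/2}$, of volume $\gg h_k^{(n+1)/2}$, contradicting $x\in\Sigma_v$.) Lemma \ref{MongeAmpereMass} then converts this extra degeneracy, via Alexandrov's estimate applied to a slice of $v$ through the degenerate directions together with the quadratic growth in the remaining ones, into $Mv(B_{r_k}(x))>\frac{1}{\epsilon}r_k^{n-1}$ for a sequence $r_k\to 0$ and \emph{every} $\epsilon>0$; the Vitali argument you already set up then gives $\mathcal{H}^{n-1}(\Sigma_v)\le C\epsilon$ for all $\epsilon$, hence zero. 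Without an argument of this type your proof stops at $\mathcal{H}^{n-1}(\Sigma_v)<\infty$.
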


\vspace{3mm}
\begin{proof}[\textbf{Proof of Theorem \ref{Main}:}]
 Let $v = u + \frac{1}{2}|x|^2$. By the discussion preceding the statement of Theorem \ref{General}, $\Sigma \subset \Sigma_v$. The conclusion 
 follows from Theorem \ref{General}.
\end{proof}
\vspace{3mm}

We briefly discuss the main ideas of the proof.
Fix $x \in \Sigma_v$ and a subgradient $p$ at $x$. In the following analysis $c,\,C$ will denote small and large constants depending on $n$ and $C_{x,p}$.
If $S_{h,p}^v(x) \subset \subset B_1$ then the definition of $\Sigma_v$ and Lemma \ref{Alexandrov} give
\begin{equation}\label{MassConcentration}
 Mv(S_h^v(x)) \geq ch^{\frac{n-1}{2}} = c(h^{1/2})^{n-1}
\end{equation}
for all $h$ small.

An important technique of the proof is to replace $v$ by $v + \frac{1}{2}|x|^2$. Since adding a quadratic can only decrease section volume, we have 
$$\Sigma_v \subset \Sigma_{v+\frac{1}{2}|x|^2}$$ and it suffices to
prove Theorem \ref{General} for this case. Then all of the sections are compactly contained in $B_1$ for $h$ small, and the diameter of
sections is at most $h^{1/2}$.  By replacing the sections $S_{h,p}^v(x)$ by $B_{\sqrt{h}}(x)$ and using a covering argument, we easily obtain that
$\Sigma_v$ has Hausdorff dimension at most $n-1$.

Lemmas \ref{NoQuadraticGrowth} and \ref{MongeAmpereMass} improve this result as follows. We aim to rule out behavior like
$$|x|^2 + |x_n|,$$
which has a singular hyperplane. For this example, the sections at $\{x_n = 0\}$ have the correct growth when we take supporting slopes
with no $x_n$-component, but the sections are too large when we take supporting slopes with $x_n$-component $1$. 

In the first lemma we use that the sections are small for {\it all} supporting planes at $x \in \Sigma_v$ to show that $v$ must grow much faster than quadratically 
in at least two directions, unlike the example above:

\begin{lem}\label{NoQuadraticGrowth}
 Assume that $v = v_0 + \frac{1}{2}|x|^2$ for some convex function $v_0$. Fix $x \in \Sigma_v$. For a supporting slope $p$ of $v$ at $x$, let
 $$d_1(h) \geq d_2(h) \geq ... \geq d_n(h)$$
 denote the axis lengths of the John ellipsoid of the section $S_{h,p}^v(x)$. Then
 $$\frac{d_{n-1}(h)}{h^{1/2}} \rightarrow 0 \text{ as } h \rightarrow 0.$$
\end{lem}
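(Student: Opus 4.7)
I argue by contradiction. Suppose $d_{n-1}(h_k) \geq c\sqrt{h_k}$ along a sequence $h_k \to 0$, for some $c > 0$. After translating $x$ to the origin and subtracting the affine function of slope $p$, I may take $x = p = 0$ and $v(0) = 0$; since $v = v_0 + \tfrac{1}{2}|y|^2$ with $v_0$ convex and $0 \in \partial v_0(0)$, this gives $v(y) \geq \tfrac{1}{2}|y|^2$, so $S_k := S^v_{h_k,0}(0) \subset B_{\sqrt{2h_k}}$ and each $d_i(h_k) \leq \sqrt{2h_k}$. Combining $|S_k| < Ch_k^{(n+1)/2}$ (from $0 \in \Sigma_v$) with John's lemma and the hypothesis $d_{n-1}(h_k) \geq c\sqrt{h_k}$ forces $d_n(h_k) \leq C'h_k$, so the section is a thin plate in the direction $\nu_k$ of its shortest axis.

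After passing to a subsequence so $\nu_k \to \nu$, I extract non-trivial subgradients at the origin. The thinness $d_n(h_k) \leq C'h_k$ gives $v(C'h_k\nu_k) \geq h_k$; since $v$ is convex with $v(0) = 0$ and $v \geq 0$, the ratio $v(t\nu_k)/t$ is non-decreasing on $(0,\infty)$, hence $\geq 1/C'$ for $t \geq C'h_k$. Letting $k \to \infty$ covers all $t > 0$ and yields $v'(0^+;\nu) \geq 1/C'$, producing $p^+ \in \partial v(0)$ with $p^+ \cdot \nu \geq 1/C'$; a symmetric argument produces $p^- \in \partial v(0)$ with $p^- \cdot \nu \leq -1/C'$. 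On the other hand, $S_k$ contains a disk $D_k \subset \nu_k^\perp$ of radius $\gtrsim \sqrt{h_k}$ on which $v \leq h_k$; approximating $\eta \in \nu^\perp$ by $\eta_k \in \nu_k^\perp$ and combining the local Lipschitz continuity of $v$ with the monotonicity of $v(r\eta)/r$, I obtain $v'(0^+;\eta) = 0$ for every $\eta \in \nu^\perp$. Hence every $p \in \partial v(0)$ satisfies $p \cdot \eta = 0$ when $\eta \perp \nu$, so $\partial v(0)$ is precisely a segment $[-\beta,\alpha]\nu$ with $\alpha,\beta \geq 1/C'$.

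Finally I turn this one-dimensional structure against the hypothesis. Consider the tilted function $\bar v(y) := v(y) - \alpha(\nu \cdot y)$, again of the form $\bar v_0 + \tfrac{1}{2}|y|^2$, with $\bar v(0) = 0$, $0 \in \partial\bar v(0) = [-\alpha - \beta, 0]\nu$, and sections at slope $0$ coinciding with $S^v_{\cdot,\alpha\nu}(0)$ and thus satisfying the $h^{(n+1)/2}$ volume bound. The target is the identity $\bar v'(0^+;\nu) = v'(0^+;\nu) - \alpha = \alpha - \alpha = 0$: if the argument of the second paragraph applied to $\bar v$ forces $\bar v'(0^+;\nu) \geq 1/C'' > 0$, we are done. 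The asymmetric lower bound $\bar v(-t\nu) \geq (\alpha+\beta)t$ already controls the $-\nu$-extent of $\{\bar v < h\}$ by $O(h)$.

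The main obstacle is obtaining an analogous $O(h)$ extent in the $+\nu$-direction from the volume bound alone, rather than the weaker $O(\sqrt{h})$ bound coming from $\bar v \geq \tfrac{1}{2}|y|^2$. I expect this gap to close via a normalization-and-compactness argument: passing to the John-normalized rescalings $\tilde v_k(y) = \bar v(A_k y)/h_k$, using $\det D^2 v \geq 1$ (which follows from $v = v_0 + \tfrac{1}{2}|y|^2$ since $D^2 v \geq I$) to compare the limiting Monge-Amp\`{e}re measure against the degenerate section geometry, and invoking Lemma~\ref{MaxDimension} on the resulting $(n-1)$-dimensional profile to force the desired extent bound and close the contradiction.
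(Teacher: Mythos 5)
The first half of your argument matches the paper's: from the contradiction hypothesis and the volume bound you correctly deduce $d_n(h_k)\lesssim h_k$, and from the resulting thin-slab geometry you extract subgradients $p^{\pm}$ with $\pm p^{\pm}\cdot\nu\geq 1/C'$. From there, however, the proof has two genuine gaps. First, the claim that $v'(0^+;\eta)=0$ for every $\eta\perp\nu$, hence that $\partial v(0)$ is exactly the segment $[-\beta,\alpha]\nu$, is not established by the sketch you give: the disk $D_k$ is centered at the center of mass of $S_{h_k,0}^v(0)$, which may sit at distance comparable to $\sqrt{h_k}$ from the origin --- the same order as the radius of $D_k$ --- so knowing $v\leq h_k$ on $D_k$ does not control $v(t\eta)/t$ for small $t$, nor the tangential components of subgradients at $0$. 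You need $\alpha\nu\in\partial v(0)$ in order for the tilted sections $S^{\bar v}_{h,0}(0)=S^v_{h,\alpha\nu}(0)$ to be admissible in the definition of $\Sigma_v$, and this is exactly what remains unproven.

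Second, and more seriously, the final step is explicitly left as an expectation, and the bound you hope to prove there --- that the tilted section has extent $O(h)$ in the $+\nu$ direction --- is the opposite of what is true. The paper instead takes the largest $a$ with $v\geq ax_n$ (so $ae_n\in\partial v(0)$ automatically, with no need to classify $\partial v(0)$) and shows by maximality of $a$ that the section $S^v_{(1+aC)h_k,\,ae_n}(0)$ reaches $x_n$-height $R_kh_k$ with $R_k\to\infty$: if the height were $O(h_k)$ along the sequence, one could increase $a$. The contradiction then comes not from a directional derivative but from volume: this tilted section also engulfs $S^v_{h_k,0}(0)$ and hence the $(n-1)$-dimensional ball of radius $\sim\delta\sqrt{h_k}$ inside it, so it contains a cone of height $R_kh_k$ over a base of radius $\sim\delta\sqrt{h_k}$ and has volume at least $cR_kh_k^{(n+1)/2}$, violating the $\Sigma_v$ bound for large $k$. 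Your proposed normalization/compactness route via Lemma~\ref{MaxDimension} does not supply this. Note also that even granting the volume bound and the width in $n-1$ directions, thinness $d_n\lesssim h_k$ only confines the tilted section to a slab orthogonal to its shortest John axis, which need only converge to $\pm\nu$; an $O(h_k)$ extent bound in the exact direction $\nu$ would require alignment to within angle $O(\sqrt{h_k})$, which nothing in your setup provides. The missing ideas are precisely the paper's two devices: the soft maximality argument forcing the tilted section to be long, and the cone construction converting length times width into a volume lower bound.
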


In the second lemma we use the above observation about the Monge-Amp\`{e}re mass of $v$ (inequality \ref{MassConcentration})
in the directions where $v$ grows much faster than quadratically from $x$. Since we replaced $v$ by $v+ \frac{1}{2}|x|^2$ we also know that $v$ grows
at least quadratically in the remaining directions. This allows us to cover $\Sigma_v$ with balls in which the Monge-Amp\`{e}re mass of $v$ is much larger 
than the radius to the $n-1$, giving the desired improvement.

\begin{lem}\label{MongeAmpereMass}
 Assume that $v = v_0 + \frac{1}{2}|x|^2$ for some convex function $v_0$. Fix $x \in \Sigma_v$. For any $\epsilon > 0$, there is a sequence $r_k \rightarrow 0$ such that
 $$Mv(B_{r_k}(x)) > \frac{1}{\epsilon}r_k^{n-1}.$$
\end{lem}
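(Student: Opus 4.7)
The plan is to combine Alexandrov's inequality (Lemma~\ref{Alexandrov}) on the sections at $x$ with the thinness estimate from Lemma~\ref{NoQuadraticGrowth}, exploiting that $v \ge \tfrac{1}{2}|y-x|^2$ forces every section to be compactly contained in $B_{\sqrt{2h}}(x)$. After translating and subtracting a linear function, reduce to $x = 0$, $v(0) = 0$, and $0 \in \partial v(0)$. Writing $v = v_0 + \tfrac{1}{2}|y|^2$ with $v_0$ convex and $0 \in \partial v_0(0)$ yields $v(y) \ge \tfrac{1}{2}|y|^2$, so $S_h := S_{h,0}^v(0) \subset B_{\sqrt{2h}}(0)$ and the John axes $d_1(h) \ge \cdots \ge d_n(h)$ of $S_h$ satisfy $d_i(h) \le C\sqrt{h}$. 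By Lemma~\ref{NoQuadraticGrowth}, $d_{n-1}(h)/\sqrt{h} \to 0$ and a fortiori $d_n(h)/\sqrt{h} \to 0$ as $h \to 0$, so
$$|S_h| \;\le\; C\prod_{i=1}^n d_i(h) \;\le\; C h^{(n-2)/2}\, d_{n-1}(h)\, d_n(h).$$

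Applying Lemma~\ref{Alexandrov} to $v$ on $S_h$ (with the constant $h$ subtracted so the result vanishes on $\partial S_h$) gives $Mv(S_h) \ge c h^n/|S_h|$. Setting $r := \sqrt{2h}$ and using $S_h \subset B_r(0)$,
$$\frac{Mv(B_r(0))}{r^{n-1}} \;\ge\; \frac{c\, h^n}{|S_h|\, r^{n-1}} \;\ge\; c'\,\frac{h^{3/2}}{d_{n-1}(h)\, d_n(h)}.$$
Given $\epsilon > 0$, the lemma thus reduces to producing a sequence $h_k \to 0$ along which $d_{n-1}(h_k)\, d_n(h_k) < c'\epsilon\, h_k^{3/2}$, or equivalently $\bigl(d_{n-1}(h_k)/\sqrt{h_k}\bigr)\bigl(d_n(h_k)/\sqrt{h_k}\bigr) < c'\epsilon\sqrt{h_k}$.

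The main obstacle is that Lemma~\ref{NoQuadraticGrowth} is purely qualitative, providing no rate at which $d_{n-1}(h)/\sqrt{h} \to 0$ relative to $\sqrt{h}$ itself. To extract the required subsequence I would argue by contradiction coupled with a blowup analysis: assume $d_{n-1}(h)\, d_n(h) \ge \delta\, h^{3/2}$ for all small $h$ and some $\delta > 0$, so $|S_h| \asymp h^{(n+1)/2}$. Normalize each section to unit size by the affine map $A_h$ associated with the John ellipsoid of $S_h$, and consider the rescaled convex functions $\tilde v_h(y) := h^{-1}\bigl[v(A_h y) - v(0)\bigr]$, which vanish at $0$, equal $1$ on $\partial \tilde S_h$, and satisfy $B_1 \subset \tilde S_h \subset B_{C(n)}$. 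Passing to a subsequential convex limit $\bar v$, the assumption $d_{n-1}(h), d_n(h) = o(\sqrt{h})$ survives the normalization as two degenerate John axes in the limit section at height $1$; applying Lemma~\ref{NoQuadraticGrowth} (or Lemma~\ref{MaxDimension} via the quadratic lower bound transferred to the rescaled picture) at the blowup scale then produces a contradiction, yielding the required sequence $r_k = \sqrt{2h_k}$.
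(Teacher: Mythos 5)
There is a genuine gap, and it sits exactly where you flagged it: the reduction to ``find $h_k$ with $d_{n-1}(h_k)\,d_n(h_k) < c'\epsilon\, h_k^{3/2}$'' is a reduction to a \emph{false} statement, so no blowup argument can close it. With $r=\sqrt{2h}$ your chain of inequalities only uses $|S_h|\le C h^{(n+1)/2}$ (the definition of $\Sigma_v$) and therefore can never yield more than $Mv(B_r)/r^{n-1}\ge c$ for a fixed constant $c$; that bound is enough for $\mathcal H^{n-1}(\Sigma_v)<\infty$ but not for measure zero, which is precisely why the lemma needs a different idea. Concretely, take $n=3$ and
$$v(x)=\tfrac12|x|^2+|x_2|^{4/3}+|x_3|^{4/3}.$$
Then $d_1(h)\sim h^{1/2}$, $d_2(h)\sim d_3(h)\sim h^{3/4}$, so $|S_h|\sim h^2=h^{(n+1)/2}$ and $0\in\Sigma_v$, Lemma \ref{NoQuadraticGrowth} is satisfied ($d_2/h^{1/2}\to0$), yet $d_2(h)d_3(h)\asymp h^{3/2}$ for \emph{all} small $h$: the sequence you need does not exist, even though the conclusion of the lemma does hold for this $v$ (here $|\partial v(B_r)|\gtrsim r\cdot r^{1/3}\cdot r^{1/3}=r^{5/3}\gg r^{2}$). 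Your blowup sketch also does not rescue this: normalizing $S_{h}$ by its John map produces a perfectly nondegenerate limit (since $|S_h|\asymp h^{(n+1)/2}$ is consistent with everything), and there is nothing left to contradict.

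The missing idea in the paper's proof is to take the balls at the scale of the \emph{degenerate} John axes rather than at scale $\sqrt h$. Let $I$ be the first index with $d_I(h)/h^{1/2}\to0$ (so $I\le n-1$ by Lemma \ref{NoQuadraticGrowth}), choose $h_k$ with $d_I(h_k)<\delta h_k^{1/2}$ and $d_i(h_k)>\eta h_k^{1/2}$ for $i<I$, and set $r_k= C(n)\,d_I(h_k)\ll h_k^{1/2}$. One applies Alexandrov not to the full section but to the restriction $w$ of $v$ to the span of $e_I,\dots,e_n$: the slice $S_k^w$ has $\mathcal H^{n-I+1}$-measure at most $C\eta^{-(I-1)}h_k^{(n+2-I)/2}$, whence $Mw(S_k^w)\ge c\,h_k^{(n-I)/2}$. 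The uniform quadratic growth of $v=v_0+\frac12|x|^2$ in the remaining $I-1$ directions fattens $\partial v(B_{r_k}(x))$ by a factor $r_k^{I-1}$ around $\partial w(S_k^w)$, giving $Mv(B_{r_k}(x))\ge c\,h_k^{(n-I)/2}r_k^{I-1}\ge c\,\delta^{-(n-I)}r_k^{n-1}$; since $n-I\ge1$, sending $\delta\to0$ produces the factor $1/\epsilon$. In the example above this is exactly the computation $Mv(B_r)\gtrsim r^{5/3}$ versus $r^2$. So your opening reductions (quadratic growth, compact containment, Alexandrov) are fine, but the core mechanism — dimension reduction to the degenerate subspace plus the small radius $r_k\sim d_I(h_k)$ — is absent, and the route you propose instead cannot work.
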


The proof of Theorem \ref{General} follows easily from Lemmas \ref{NoQuadraticGrowth} and \ref{MongeAmpereMass}.

\begin{proof}[\textbf{Proof of Theorem \ref{General}:}]
 Since $\Sigma_v \subset \Sigma_{v+\frac{1}{2}|x|^2}$, we may assume without loss of generality that
 $v$ has the form $v_0 + \frac{1}{2}|x|^2$ with $v_0$ convex.

 Fix $\epsilon$ small. By Lemma \ref{MongeAmpereMass}, for each $x \in \Sigma_v$ we can choose an arbitrarily small $r$ such that
 $$Mv(B_r(x)) > \frac{1}{\epsilon}r^{n-1}.$$
 Cover $\Sigma_v \cap B_{1/2}$ with such balls, and choose a Vitali subcover $\{B_{r_i}(x_i)\}_{i=1}^{N}$, i.e. a disjoint subcollection
 such that $B_{3r_i}(x_i)$ cover $\Sigma_v \cap B_{1/2}$. Then
 \begin{align*}
  \sum_{i=1}^{N} (3r_i)^{n-1} &\leq C\epsilon \sum_{i=1}^{N} Mv(B_{r_i}(x_i)) \\ 
  &\leq C\epsilon,
 \end{align*}
since $v$ is locally Lipschitz and the $B_{r_i}$ are disjoint. This means exactly that
$$\mathcal{H}^{n-1}(\Sigma_v \cap B_{1/2}) = 0.$$
The above reasoning also gives $\mathcal{H}^{n-1}(\Sigma_v \cap B_{1-\beta}) = 0$ for any $\beta$ small, but not necessarily for $\beta = 0$ 
since we only know $v$ is locally Lipschitz.
To get $$\mathcal{H}^{n-1}(\Sigma_v \cap B_1) = 0,$$ use that $\Sigma_v \cap B_1 = \cup_{k=1}^{\infty} \{\Sigma_v \cap B_{1-1/k}\}$ and 
apply countable subadditivity.
\end{proof}

%%%%%%%%%%%%%%%%%%%%%%

We now prove Lemmas \ref{NoQuadraticGrowth} and \ref{MongeAmpereMass}.

\begin{proof}[\textbf{Proof of Lemma \ref{NoQuadraticGrowth}:}]
By translating and subtracting a linear function assume that $x = p = 0$. Assume by way of contradiction that we can find
$h_k \rightarrow 0$ and some $\delta > 0$ such that
\begin{equation}\label{ContraHypothesis}
 d_{n-1}(h_k) > \delta h_k^{1/2}
\end{equation}
for all $k$. We first show that $v$ is trapped by two tangent planes at $0$. 

Let $x_{1,k}$ and $x_{2,k}$ be the points on $\partial S_{h_k,0}^v(0)$ where the hyperplanes perpendicular to the shortest axis
of the John ellipsoid become tangent to $\partial S_{h_k,0}^v(0)$, and let $p_{1,k}$ and $p_{2,k}$ denote subgradients at these points. Since 
$$d_1(h_k)d_2(h_k)...d_n(h_k) < Ch_k^{\frac{n+1}{2}},$$
we have by the inequality \ref{ContraHypothesis} that $d_n(h_k) < \frac{C}{\delta^{n-1}}h_k$ for all $k$. By this observation and convexity we can rotate and pass to a subsequence such that 
$$p_{1,k} \rightarrow c_1(\delta)e_n, \quad p_{2,k} \rightarrow -c_2(\delta)e_n.$$
Then $v$ is trapped by the planes $\pm c(\delta)x_n$. We conclude that 
$$S_{h_k,0}^v(0) \subset \{|x_n| < C(\delta)h_k\}.$$

To complete the proof, we show that the volumes of sections obtained with tilted supporting planes are too large. Take the largest $a$ such that $v \geq ax_n$ and consider the sections
$$S_k = S_{(1+aC(\delta))h_k,ae_n}^v(0).$$
Then $S_k$ engulf $S_{h_k,0}^v(0)$. Furthermore, 
$$\sup\{|x_n|: x \in S_k\} = R_k h_k,$$
where $R_k \rightarrow \infty$ as $k \rightarrow \infty$. Indeed, if not, then for some small $\epsilon$ and a sequence $b_i \rightarrow 0$ we would have
$v(x',b_i) > (a+\epsilon)b_i$ for all $x'$. Convexity and $v(0)=0$ imply that $v > (a+\epsilon)x_n$ for all $x_n > b_i$, which in turn implies that 
$$v > (a+\epsilon)x_n,$$
contradicting the definition of $a$. 

Finally, let $x_k = (x'_k,R_kh_k) \in S_k$ be the point in $S_k$ furthest in the $e_n$ direction.
Since $v$ grows at least quadratically away from every tangent plane, we have
\begin{equation}\label{BoundedSections}
 |x'_k| < C(\delta,a)h_k^{1/2}.
\end{equation}
Explicitly, since $ae_n$ is a subgradient at $0$ and $v$ is of the form $v_0 + \frac{1}{2}|x|^2$ with $v_0$ convex, we have that $ae_n$ is a subgradient of $v_0$
at $0$, giving
$$ax_n + \frac{1}{2}|x|^2 \leq v \leq C(\delta,a)h_k + ax_n$$
in $S_k$, giving the desired bound on $|x_k'|$.

Recall that 
$$S_{h_k,0}^v(0) \subset \{|x_n| < C(\delta)h_k\} \cap B_{h_k^{1/2}}(0).$$
Take any two points $y,z$ in $\{|x_n| < C(\delta)h_k\} \cap B_{h_k^{1/2}}(0)$ a distance $\delta h_k^{1/2}$ apart,
take the lines from these points to $(x_k',R_kh_k)$ and denote the intersections of these lines with $\{x_n = C(\delta)h_k\}$ by $\tilde{y}$ and $\tilde{z}$.
Since $|y_n-z_n| < Ch_k$ and $|y-z| > \delta h_k^{1/2}$, it is obvious that $|y'-z'| > \frac{\delta}{2} h_k^{1/2}$ for $k$ large. By similar triangles
and inequality \ref{BoundedSections}, we also have 
$$|y'-\tilde{y}'| = \frac{C}{R_k}|y'-x_k'| \leq \frac{C}{R_k}h_k^{1/2},$$
and we have the same bound on $|z'-\tilde{z}'|$ (see Figure \ref{Cone}). We conclude that
\begin{equation}\label{ConeConstruction}
 |\tilde{y}-\tilde{z}| \geq |y'-z'| - |y'-\tilde{y}'| - |z'-\tilde{z}'| \geq (\delta/2 - C/R_k)h_k^{1/2}.
\end{equation}

Since $d_i(h_k) > \delta h_k^{1/2}$ for all $i \leq n-1$, inequality \ref{ConeConstruction} (applied to the center of the John ellipsoid for $S_{h_k,0}^v(0)$ and the $n-1$
dimensional ball of radius $\delta h_k$ it contains) implies that $S_k$ contains the cone with vertex 
$(x'_k,R_kh_k)$ and base containing a ball of radius $(\delta/2-C(a,\delta)/R_k)h_k^{1/2}$ on the hyperplane $\{x_n = C(\delta)h_k\}$.

\begin{figure}
 \centering
    \includegraphics[scale=0.3]{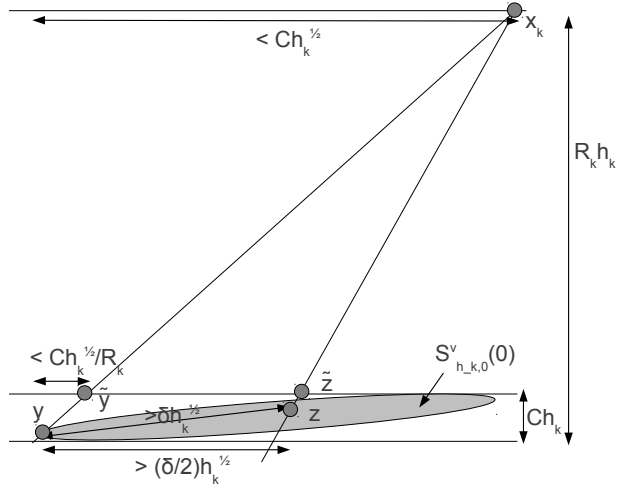}
 \caption{The cone above $\{x_n = Ch_k\}$ generated by $x_k$ and the John ellipsoid of $S_{h_k,0}^v(0)$ has a base containing a ball of radius at least $(\delta/2-C(a,\delta)/R_k)h_k^{1/2}$.}
 \label{Cone}
\end{figure}

We conclude that
$$|S_k| \geq c(\delta,a)R_kh_k^{\frac{n+1}{2}},$$
contradicting our definition of $\Sigma_v$ for $k$ large.
\end{proof}

%%%%%%%%%%%%%%%%%%%%

\begin{proof}[\textbf{Proof of Lemma \ref{MongeAmpereMass}:}]
 Fix a subgradient $p$ at $x$ and let $d_1(h),...,d_n(h)$ be defined as in the statement of Lemma \ref{NoQuadraticGrowth}. Let
 $$I = \min\left\{i : \frac{d_i(h)}{h^{1/2}} \rightarrow 0 \text{ as } h \rightarrow 0\right\}.$$
 Fix $\delta$ small. Then we can find a sequence $h_k \rightarrow 0$ and $\eta$ depending only on $p$ such that
 \begin{equation}\label{FasterThanQuadratic}
  d_{I}(h_k) < \delta h_k^{1/2},
 \end{equation}
 and 
 \begin{equation}\label{AtMostQuadratic}
  d_i(h_k) > \eta h_k^{1/2}
 \end{equation}
 for all $i < I$. Rotate the axes so that the $e_i$ are the axes for the John ellipsoid of $S_{h_k,p}^v(x)$ and 
 assume by translation that $x = 0$.

 Take the restriction of $v$ to the subspace spanned by $e_{I},...,e_n$, and
 call this restriction $w$. Let 
 $$S_k^w = S_{h_k,p}^v(x) \cap \{x_1=...=x_{I-1}=0\},$$
 the slice of the section $S_{h_k,p}^v(x)$ in this subspace. Then since
 $$d_1(h_k)d_2(h_k)...d_n(h_k) \leq Ch_k^{\frac{n+1}{2}}$$
 and $v$ grows at most quadratically in the first $I-1$ directions (inequality \ref{AtMostQuadratic}), we have
 $$|S_k^w|_{\mathcal{H}^{n-I+1}} \leq \frac{C}{\eta^{I-1}}h_k^{\frac{n+2-I}{2}}.$$
 Using this and Lemma \ref{Alexandrov},
 \begin{equation}\label{SliceMA}
  Mw(S_k^w) \geq c\eta^{I-1}h_k^{\frac{n-I}{2}}.
 \end{equation}
 Finally, let $r_k = C(n)d_{I}(h_k)$, with $C(n)$ taken large enough that
 $$S_k^w \subset B_{r_k/2}(x).$$
 By strict quadratic growth in all directions, $\partial v(B_{r_k}(x))$ contains a ball of radius $r_k/2$ around every point in $\partial v(S_k^w)$. It follows that
 \begin{align*}
  Mv(B_{r_k}(x)) &\geq c(n)Mw(S_k^w)r_k^{I-1} \\
  &\geq ch_k^{\frac{n-I}{2}}r_k^{I-1} \text{ (inequality \ref{SliceMA})} \\
  &\geq \frac{c}{\delta^{n-I}}r_k^{n-1} \text{ (inequality \ref{FasterThanQuadratic})}.
 \end{align*}
 By Lemma \ref{NoQuadraticGrowth} we have $I \leq n-1$, so the conclusion follows.
\end{proof}

%%%%%%%%%%%%%

\begin{rem}\label{HigherDimSing}
 Replacing $\Sigma_v$ with the set $\Sigma_v^k$ of points such that
 $$|S_{h,p}^v| < C_{x,p}h^{\frac{n+k}{2}}$$
 for all $h$ small ($1 \leq k \leq n-1$) and replacing $1$ with $k$ in the preceding, one obtains that $\mathcal{H}^{n-k}(\Sigma_v^k) = 0$. 
 If $\det D^2u \geq 1$, such growth happens for $v = u + \frac{1}{2}|x|^2$ at
 points where $u$ agrees with a linear function on a $k$-dimensional subspace. This shows that the Hausdorff dimension of the
 $k$-dimensional singularities is at most $n-k$. In particular, we recover Lemma \ref{MaxDimension} since for $k \geq \frac{n}{2}$ we would
 have a $k$-dimensional singularity with Hausdorff $k$-dimensional measure $0$.
\end{rem}

%%%%%%%%%%%%%%%%%%%%%%%%%%%%%%%%%%%%%%%%%%%%%
\section{Examples}\label{Examples}
In this section we construct examples of solutions to $\det D^2u = 1$ in $\mathbb{R}^3$ such that $\Sigma$ has Hausdorff dimension
as close to $2$ as we like. A small modification produces the analagous examples in $\mathbb{R}^n$.

For this section, fix $\delta > 0$ small. We construct our examples in several steps, which we briefly describe:
\begin{enumerate}
 \item First, we construct functions $w$ with
 $$\det D^2w \geq 1$$
 in $\mathbb{R}^3$ that degenerate along $\{x_1=x_2=0\}$ and behave like $x_1^{2-\delta}$ along the $x_1$ axis.
 \item Next, we construct a standard $S \subset [-1,1]$ with Hausdorff dimension close to $1$ and a convex function $v$ on $[-1,1]$ 
 such that for any $x \in S$, there is a tangent line such that $v$ separates from this line faster than $r^{2-\delta}$.
 \item Finally, we get our example by solving the Dirichlet problem
 $$\det D^2 u = 1 \quad \text{ in } \Omega = \{|x'| < 1\} \times [-1,1], \quad \quad u|_{\partial \Omega} = C(\delta)(v(x_1) + |x_2|)$$
 and comparing with $w$ at points in $S \times \{0\} \times \{\pm 1\}$.
\end{enumerate}

In the following analysis $c$ and $C$ will denote small and large constants depending on $\delta$.

%%%%%%%%%%%%%%
\textbf{Construction of $w$:} We look for a convex function $w(x_1,x_2,x_3)$ with the homogeneity
$$w(x_1,x_2,x_3) = \frac{1}{\lambda}h(\lambda^{1/\alpha}x_1,\lambda^{1/\beta}x_2)(1+x_3^2),$$
where $\alpha$ and $\beta$ satisfy $1 < \alpha,\beta < 2$ and
$$\frac{1}{\alpha} + \frac{1}{\beta} = \frac{3}{2}.$$
(It is easy to check that $\geq 3/2$ is necessary for such a function to have $\det D^2w$ bounded below). Note that this rescaling preserves the
curves $x_2 = mx_1^{\alpha/\beta}$.

Let $f(x)$ denote $1 + x^2$. An obvious candidate for $w$ is
$$w(x_1,x_2,x_3) = (x_1^{\alpha} + x_2^{\beta})f(x_3).$$
One checks that
\begin{align*}
 \det D^2w  &= |x_1|^{2\alpha-2}|x_2|^{\beta-2}\left(2\alpha\beta(\alpha-1)(\beta-1)f^2 - 4\alpha^2\beta(\beta-1)fx_3^2\right) \\
 &+ |x_1|^{\alpha-2}|x_2|^{2\beta-2}\left(2\alpha\beta(\alpha-1)(\beta-1)f^2 - 4\alpha\beta^2(\alpha-1)fx_3^2\right).
\end{align*}
Take $\alpha = 2-\delta$. Then for $|x_3|$ small depending on $\delta$ we have
$$\det D^2w \geq c(\delta)(|x_1|^{2\alpha-2}|x_2|^{\beta-2} + |x_1|^{\alpha-2}|x_2|^{2\beta-2}).$$
Along the curves $x_2 = mx_1^{\alpha/\beta}$, we compute
$$\det D^2w \geq c(\delta)(|m|^{\beta-2} + |m|^{2\beta -2}) \geq c(\delta),$$
since $1 < \beta < 2$. 

Thus, up to rescaling the $x_3$-axis and multiplying by a constant, we have
$$\det D^2w \geq 1 \quad \text{ in } \Omega = \{|x'|<1\} \times [-1,1].$$
 
%%%%%%%%%%%%%%%%%%%
\textbf{Construction of $S$:}
Let $\epsilon > 0$ be a small constant we will choose shortly depending on $\delta$.
Construct a self-similar set in $[-1/2,1/2]$ as follows: First, remove an open interval of length $\gamma = 1-2^{-3\epsilon}$ from the center. Proceed 
inductively by removing intervals a fraction $\gamma$ of each of those that remains. Denote the centers of the intervals removed at stage $k$ by 
$\{x_{i,k}\}_{i=1}^{2^{k-1}}$, and the intervals by $I_{i,k}$. Finally, let 
$$S = [-1/2,1/2] - \cup_{i,k} I_{i,k}.$$
It is easy to check that $|I_{i,k+1}| = \gamma2^{-(1+3\epsilon)k}$ and that $S$ has Hausdorff dimension $\frac{1}{1+ 3\epsilon}$.

%%%%%%%%%%%%%%%%%%
\textbf{Construction of $v$:}
Let
$$ v_0(x) = \left\{
        \begin{array}{ll}
            |x| & \quad |x| \leq 1 \\
            2|x|-1 & \quad |x| > 1
        \end{array}
    \right.$$
We add rescalings of $v_0$ together to produce the desired function:
$$v(x) = \sum_{k=1}^{\infty}\sum_{i=1}^{2^{k-1}} 2^{-2(1+2\epsilon) k}v_0(2\gamma^{-1}2^{(1+3\epsilon)k}(x-x_{i,k})).$$
We now check that $v$ satisfies the desired properties:
\begin{enumerate}
 \item v is convex, as the sum of convex functions. Furthermore,
 \begin{align*}
  |v(x)| &\leq C\sum_{k=1}^{\infty}\sum_{i=1}^{2^{k-1}} 2^{-(1+\epsilon)k} \\
  &\leq C\sum_{k=1}^{\infty}2^{-\epsilon k},
 \end{align*}
 so $v$ is bounded.

 \item Let $x \in S$. We aim to show that $v$ separates from a tangent line more than
 $r^{2-\delta}$ a distance $r$ from $x$. By subtracting a line assume that $v(x) = 0$ and that $0$ is a subgradient at $x$.
 Assume further that $x+r < 1/2$ and that $2^{-(1+3\epsilon)k} < r \leq 2^{-(1+3\epsilon)(k-1)}$. 
 There are two cases to examine:
 
 \textbf{Case 1:} There is some $y \in (x+r/2,x+r) \cap S$. Then by the construction of $S$ it is easy to see that there is some interval $I_{i,k+2}$ such
 that $I_{i,k+2} \subset (x,x+r)$. On this interval, $v$ grows by
 $$2^{-2(1+2\epsilon)(k+2)} \geq cr^{2\frac{1+2\epsilon}{1+3\epsilon}} = cr^{2-\delta},$$
 where we choose $\epsilon$ so that
 $$\delta = \frac{2\epsilon}{1+3\epsilon}.$$

 \textbf{Case 2:} Otherwise, there is an interval $I_{i,j}$ of length exceeding $r/2$ such that $(x+r/2,x+r) \subset I_{i,j}$. In particular,
 $j \leq k+2$. Then at the left point
 of $I_{i,j}$, the slope of $v$ jumps by at least $2^{-(1+\epsilon)(k+2)}$. It follows that at $x+r$, $v$ is at least
 $$\frac{r}{2}2^{-(1+\epsilon)(k+2)} \geq cr^{2-\delta}.$$

 Thus, $v$ has the desired properties.
\end{enumerate}

%%%%%%%%%%%%
\textbf{Construction of $u$:} We recall the following lemma on the solvability of the Monge-Amp\`{e}re equation (see \cite{Gut},\cite{Har}).
\begin{lem}\label{Solvability}
 If $\Omega$ is open, bounded and convex, $\mu$ is a finite Borel measure on $\Omega$ and $g$ is continuous and convex in $\overline{\Omega}$
 then there exists a unique convex solution $u \in C(\overline{\Omega})$ to the Dirichlet problem
 $$\det D^2u = \mu, \quad u|_{\partial\Omega} = g.$$
\end{lem}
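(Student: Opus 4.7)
The result has two halves, uniqueness and existence. \emph{Uniqueness} follows from the Alexandrov comparison principle for the Monge-Amp\`ere operator: if $u_1, u_2 \in C(\overline\Omega)$ are convex with $Mu_1 \geq Mu_2$ in $\Omega$ and $u_1 \leq u_2$ on $\partial\Omega$, then $u_1 \leq u_2$ in $\Omega$. Applying this symmetrically to two candidate solutions with the same measure $\mu$ and boundary data $g$ forces $u_1 \equiv u_2$.

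For \emph{existence} I will argue by approximation in two stages. The first stage handles $\mu$ a finite sum of Dirac masses $\sum_{i=1}^N \omega_i \delta_{x_i}$ with $x_i \in \Omega$. Given target values $c = (c_1, \ldots, c_N) \in \mathbb{R}^N$, let $u^{(c)}$ be the largest convex function on $\overline\Omega$ with $u^{(c)} \leq g$ on $\partial\Omega$ and $u^{(c)}(x_i) \leq c_i$ for each $i$. Off the points $x_i$ this function is piecewise affine, so $Mu^{(c)}$ is atomic with weights $\omega_i^{(c)}$ depending continuously and monotonically on $c$; the weights run from $0$ (when each $c_i$ is large) to $\infty$ (as $c_i \to -\infty$), so an intermediate-value / degree argument calibrates $c$ to realize the prescribed $(\omega_1, \ldots, \omega_N)$. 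The second stage approximates a general finite Borel $\mu$ weakly by such discrete measures $\mu_k$ (bin $\Omega$ into cubes of side $1/k$ and concentrate each cube's mass at its center). The corresponding solutions $u_k$ are uniformly bounded by convex barriers, and bounded convex functions are locally Lipschitz with uniform constants, so $\{u_k\}$ is locally equicontinuous in $\Omega$; extract a subsequence $u_k \to u$ locally uniformly. The classical weak continuity of the Monge-Amp\`ere operator under local uniform convergence of convex functions then identifies $Mu = \lim Mu_k = \mu$.

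The hard step is attainment of $g$ continuously on $\partial\Omega$, uniformly in $k$. The convexity hypothesis on $g$ enters here crucially. At each $x_0 \in \partial\Omega$ and each affine supporting hyperplane $\ell$ to the graph of $g$ at $x_0$ (which exists because $g$ is convex on $\overline\Omega$), the function $\ell$ is affine with $\ell(x_0) = g(x_0)$, $\ell \leq g$ on $\overline\Omega$, and $M\ell = 0 \leq \mu_k$; comparison then gives $u_k \geq \ell$ in $\Omega$, providing a lower barrier uniform in $k$. For the upper side, the convex envelope $\tilde g := \sup\{h \text{ convex on } \overline\Omega : h \leq g \text{ on } \partial\Omega\}$ is convex with $\tilde g = g$ on $\partial\Omega$, continuous up to $\partial\Omega$ (via convexity of $\Omega$ together with continuity of $g$), and satisfies $u_k \leq \tilde g$ since $u_k$ is itself a candidate in the supremum defining $\tilde g$. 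These barriers pass to the limit to pin down $u(x_0) = g(x_0)$ continuously and yield $u \in C(\overline\Omega)$ with $u|_{\partial\Omega} = g$, completing the existence half.
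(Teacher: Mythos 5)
The paper itself does not prove this lemma; it quotes it from \cite{Gut} and \cite{Har}. Your outline follows the standard route from those references: comparison for uniqueness, the Dirac-mass case plus weak continuity of the Monge-Amp\`ere measure under locally uniform convergence for existence, with the convexity of $g$ entering only through the boundary barriers. The uniqueness half, the discrete construction, the compactness and passage to the limit, and the upper barrier $u_k \leq \tilde g$ are all fine as sketched.

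There is, however, a genuine error in the lower barrier. You claim that if $\ell$ is a supporting affine function of $g$ at $x_0 \in \partial\Omega$, then $M\ell = 0 \leq \mu_k$ together with $\ell \leq g$ on $\partial\Omega$ yields $u_k \geq \ell$ by comparison. This misapplies the comparison principle you correctly stated in the uniqueness half: to conclude $\ell \leq u_k$ you would need either $M\ell \geq Mu_k$ (false unless $\mu_k = 0$) or $u_k \leq \ell$ on $\partial\Omega$ (false, since $u_k = g \geq \ell$ there). The conclusion itself is false: for $\Omega = B_1$, $g \equiv 0$, $\mu = \Lambda\,dx$ the solution is $u = \tfrac{1}{2}\Lambda^{1/n}(|x|^2-1) < 0 = \ell$ throughout $\Omega$. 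A solution carrying positive mass necessarily dips \emph{below} every affine function dominated by its boundary data, so an affine function can never serve as a lower barrier here. The correct lower bound is Alexandrov's maximum principle (a variant of which appears as Lemma \ref{Alexandrov} in the paper): $w_k := u_k - \ell$ is convex with $w_k \geq 0$ on $\partial\Omega$ and $Mw_k = \mu_k$, and on the set $\{w_k < 0\}$ one has $|w_k(x)|^n \leq C(n)\,\mathrm{dist}(x,\partial\Omega)\,(\mathrm{diam}\,\Omega)^{n-1}\,\mu_k(\Omega)$, whence $u_k(x) \geq \ell(x) - C\,\mathrm{dist}(x,\partial\Omega)^{1/n}$ with $C$ uniform in $k$ because $\mu_k(\Omega)$ is uniformly bounded. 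Combined with your upper barrier this pins down the boundary values uniformly in $k$ and repairs the proof; the rest of your argument stands.
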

Let $g(x_1,x_2,x_3) = C(v(x_1) + |x_2|)$ for a constant $C$ depending on $\delta$ we will choose shortly, 
and obtain $u$ by solving the Dirichlet problem
$$\det D^2 u = 1 \quad \text{ in } \Omega = \{|x'| < 1\} \times [-1,1], \quad \quad u|_{\partial \Omega} = g.$$
Take $z = (z_1,0,0)$ for $z_1 \in S$, and let $a_z$ be a subgradient of $v$ at $z_1$. Let
$$w_z(x) = g(z) + a_z(x_1-z_1) + w(x-z).$$
Since
$$w(x-z) \leq C_0(|x_1-z_1|^{2-\delta} + |x_2|^{\beta})$$
for some $C_0$, we can take $C$ large so that
$$g(x_1,x_2,\pm 1) \geq g(z) + a_z(x_1-z_1) + C(|x_1-z_1|^{2-\delta} + |x_2|) \geq w_z(x_1,x_2,\pm 1)$$
on the top and bottom of $\Omega$. Furthermore, since $g$ is independent of $x_3$ and for any fixed $x'$ we know $w_z$ takes its maxima at $(x',\pm 1)$,
we have $g \geq w_z$ on all of $\partial \Omega$.
Thus, $u \geq w_z$ in all of $\Omega$. Since $u$ takes the value $g(z)$ at $(z_1,0,\pm 1)$ and $w_z(z_1,0,x_3) = g(z)$ for all
$|x_3| < 1$, we have by convexity that $u = g(z)$ along $(z_1,0,x_3)$. 

We conclude that $\Sigma$ contains $S \times \{0\} \times (-1,1),$
which has Hausdorff dimension $1 + \frac{1}{1+3\epsilon} = 2-\frac{3}{2}\delta$. 

%%%%%%%%%%
\begin{rem}
 To get the analagous example in $\mathbb{R}^n$, take
 $$u(x_1,x_2,x_3) + x_4^2 + ... + x_n^2.$$
 Observe that this solution has exactly the behavior described by Lemma \ref{NoQuadraticGrowth}, which says that $u$ must grow faster than
 quadratically in two directions. In the next section we show that for any $\epsilon$, these examples are not in $W^{2,1+\epsilon}$ for $\delta$ small
 enough.
\end{rem}

%%%%%%%%%%%%%%%%%%%%%%%%%%%%%%%%%%%%%%%%%%%%%%%%%%%%%%%%%%%%%%%%%%%%%%%%%%%%%%%%%%%%%%%%%%%%%%%%%%%%%%
\section{$W^{2,1}$ Regularity}\label{W21Section}
In this section we obtain $W^{2,1}$ regularity for singular solutions to the Monge-Amp\`{e}re equation. Furthermore, by examining the examples in the previous section
we show that we cannot improve this result to $W^{2,1+\epsilon}$ regularity for an $\epsilon$ depending on $\lambda,\Lambda$ and $n$.

The following result of De Philippis, Figalli and Savin (see \cite{DFS}) gives $W^{2,1+\epsilon}$ regularity of solutions to $\lambda \leq \det D^2u \leq \Lambda$ in
compactly contained sections:
\begin{thm}\label{W21epsilon}
 Assume that 
 $$\lambda \leq \det D^2u \leq \Lambda \quad \text{ in } \Omega \text { and } S_h(x) \subset \subset \Omega.$$
 Then $u \in W^{2,1+\epsilon}(S_{h/2}(x))$ for some $\epsilon$ depending only on $\lambda,\Lambda$ and $n$.
\end{thm}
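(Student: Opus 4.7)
The plan is to prove a quantitative level-set decay estimate of the form
$$|\{x \in S_{h/2}(x) : |D^2 u(x)| > t\}| \leq C\, t^{-(1+\epsilon)},$$
which immediately yields $u \in W^{2,1+\epsilon'}_{loc}(S_{h/2}(x))$ for any $\epsilon' < \epsilon$. The starting point is the earlier $W^{2,1}$ estimate of De Philippis and Figalli, which already guarantees that $D^2 u$ is locally integrable; our task is to refine this bare integrability into a quantitative power decay.

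First I would exploit the affine invariance of the Monge-Amp\`ere equation to reduce to a normalized geometry. By John's lemma there is an affine map $A$ with $A(B_{c(n)}) \subset S_h(x) \subset A(B_{C(n)})$; after subtracting a supporting affine $\ell$ and rescaling via $\tilde u(y) = (\det A)^{-2/n}(u(Ay) - \ell(Ay))$, one obtains $\tilde u$ with $\lambda \leq \det D^2 \tilde u \leq \Lambda$ on a domain comparable to $B_1$, and bounds on $\tilde u$ transfer back to $u$ with explicit dependence on $A$. The heart of the argument is then a decay lemma: there exist $\eta \in (0,1)$ and $M > 1$, depending only on $\lambda,\Lambda,n$, such that for any normalized solution,
$$|\{x \in S^{\tilde u}_{1/2} : |D^2 \tilde u(x)| > M\}| \leq \eta\, |S^{\tilde u}_{1/2}|.$$
I would prove this by contradiction and compactness: take a sequence of normalized solutions violating the estimate; Caffarelli's strict convexity and $C^{1,\alpha}$ theory yield uniform interior regularity and produce a limit solution whose Hessian is classically controlled on compact subsets by Caffarelli's $C^{2,\alpha}$ estimates, contradicting the assumed concentration of $|D^2 \tilde u|$ on a non-negligible set.

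The final step is a Calder\'on--Zygmund-type iteration. One covers the super-level set $\{|D^2 \tilde u| > M^k\}$ by sections in which, after further renormalization, the decay lemma may be reapplied; iterating yields $|\{|D^2 \tilde u| > M^k\}| \leq C\eta^k$, equivalent to the desired power decay with $\epsilon = |\log \eta|/\log M$. Transferring back via $A$ gives the estimate for $u$ in $S_{h/2}(x)$. I expect the main obstacle to be the compactness step: one must use the engulfing property of sections and the affine invariance with care to make sense of limits of sections of bounded eccentricity, and to ensure that the limit solution inherits strict convexity on the relevant region so that Caffarelli's interior theory can be applied uniformly along the subsequence.
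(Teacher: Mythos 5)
Note first that the paper does not prove this statement at all: it is quoted verbatim from De Philippis--Figalli--Savin \cite{DFS}, so the relevant comparison is with their published argument. Your overall architecture (normalize by John's lemma, prove a one-step decay estimate for a normalized solution, iterate via a Calder\'on--Zygmund covering by sections) is indeed the right skeleton, but two of the three steps have genuine gaps. The most serious is the compactness proof of the decay lemma. The class of normalized solutions of $\lambda\leq\det D^2u\leq\Lambda$ with merely \emph{measurable} right hand side is closed under the limits you would extract, and the limit is just another solution of the same inequalities; Caffarelli's $C^{2,\alpha}$ theory requires $f\in C^\alpha$ (and his $W^{2,p}$ theory requires $f$ close to a constant), so the limit's Hessian is not classically controlled and no contradiction is reached. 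Indeed Wang's examples show that for large $\Lambda/\lambda$ the Hessian of a normalized solution need not lie in $L^p$ for any prescribed $p$, so no soft argument can give strong pointwise control of $D^2u$ on most of the section. (There is also the technical point that largeness of $|D^2u_k|$ on a set of fixed measure does not pass to weak limits of Hessian measures.) What is actually needed, and what \cite{DFS} prove using the quantitative equi-integrability coming from the De Philippis--Figalli $W^{2,1}$ estimate, is an \emph{integral} decay: $\int_{\{\|D^2u\|\geq M\}\cap S_{1/2}}\|D^2u\|\leq\gamma\int_{S_1}\|D^2u\|$ with $\gamma<1$. Your measure-theoretic version with unrelated constants $\eta<1$ and $M>1$ is too weak: Chebyshev already gives $\eta\sim C/M$, and iterating $|\{\|D^2u\|>M^k\}|\leq\eta^k$ produces $t^{-(1+\epsilon)}$ decay only if $\eta M<1$, a quantitatively stronger smallness that is precisely the content of the theorem (your own formula $\epsilon=|\log\eta|/\log M$ in fact yields decay $t^{-\epsilon}$, not $t^{-(1+\epsilon)}$).

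The second gap is in the iteration. For uniformly elliptic equations the Calder\'on--Zygmund scheme works because rescaling a cube rescales $\|D^2u\|$ by a constant; here the natural substitutes for cubes are sections, and the John normalization of a section $S_{h}(y)$ replaces $D^2u$ by $(\det A)^{-2/n}A^TD^2uA$ with $A$ possibly very eccentric. Superlevel sets of the renormalized Hessian therefore bear no fixed relation to those of $D^2u$, and ``reapplying the decay lemma after renormalization'' does not control the original quantity $|\{\|D^2u\|>M^k\}|$. De Philippis--Figalli--Savin circumvent this by working with the sets of points touched from above and below by paraboloids whose openings are comparable to the normalizing map of each section in the cover, and by propagating the integral $\int\|D^2u\|$ rather than measures of superlevel sets. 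To repair your proof you would need to replace both the compactness step and the naive superlevel-set bookkeeping with these ingredients; as written, the argument does not close.
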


$W^{2,1}$ regularity then follows from our main theorem.

\begin{proof}[\textbf{Proof of Theorem \ref{W21}:}]
 We will show that $u \in W^{2,1}(B_{1/2})$. Local $W^{2,1}$ regularity follows from a standard covering argument.

 Theorem \ref{W21epsilon} gives local $W^{2,1}$ regularity on $B_1 - \Sigma$.
 By Theorem \ref{Main}, for any $\eta > 0$ we can cover
 $\Sigma \cap B_{1/2}$ by balls $\{B_{r_i}(x_i)\}$ with $r_i < 1/4$ such that
 $$\sum_{i=1}^{\infty} r_i^{n-1} < \eta.$$
 Let $A = \cup_{i=1}^{\infty} B_{r_i}(x_i)$. Since $u$ is a convex function, the second derivatives are controlled by $\Delta u$. It follows that
 \begin{align*}
  \int_{A} \|D^2u\| \, dx &\leq \int_{A} \Delta u \, dx \\
 &\leq \sum_{i=1}^{\infty} \int_{\partial B_{r_i}} u_\nu \,ds \\
 &\leq C\sum_{i=1}^{\infty} r_i^{n-1} \\
 &\leq C\eta,
 \end{align*}
 where $C$ is the Lipschitz constant of $u$ in $B_{3/4}$. This shows that the second derivatives cannot concentrate on $\Sigma$.
\end{proof}

 We now examine the integrability of $\Delta u$ for the examples constructed in the previous section. Fix a small $\delta$.
 We will show that for some $\epsilon$ small depending on $\delta$, we have $u \notin L^{1+\epsilon}$.
 (Note that this $\epsilon$ is not related to the one from the previous section). 

 On any ball $B_r$, by H\"{o}lder's inequality we have
 $$\int_{B_r} (\Delta u)^{1+\epsilon} \,dx \geq c(n) r^{-\epsilon n} \left(\int_{B_r} \Delta u \,dx\right)^{1+\epsilon}.$$
 Recall from the construction in the previous section that at points in $\Sigma$, $u$ grows
 from its tangent plane faster than $x_2^{\beta} = x_2^{1+\frac{\delta}{4-3\delta}}$ in the $x_2$ direction (at singular points, a translation and modification of $w$
 by a linear function touches $u$ by below).
 It follows that for $x \in \Sigma$ and $l_x$ a tangent plane to $u$ at $x$, we have
 $$\sup_{\partial B_r(x)}(u-l_x) \geq r^{\beta}.$$
 Applying convexity,
 \begin{align*}
  \int_{B_r(x)} (\Delta u)^{1+\epsilon} \,dx &\geq c(n)r^{-\epsilon n}\left(\int_{\partial B_r} u_{\nu} \,ds\right)^{1+\epsilon} \\
  &\geq c(n)r^{(n+\beta-2)(1+\epsilon)-\epsilon n} \\
  &\geq c(n)r^{n- 1 - \epsilon + (1+\epsilon)\frac{\delta}{3}}. 
 \end{align*}

 Fix $\eta$ small and cover $S \times \{0\} \times (-1,1)^{n-2}$ with balls of radius $r_i < \eta$. Take a Vitali subcover $\{B_{r_i}\}_{i=1}^{\infty}$. It follows that
 $$\int_{B_1} (\Delta u)^{1+\epsilon} \,dx \geq c(n) \sum_{i=1}^{\infty} r_i^{n-1-\epsilon+(1+\epsilon)\frac{\delta}{3}}.$$
 
 Taking $\epsilon = 4\delta$ above, we conclude that
 $$\int_{B_1} (\Delta u)^{1+\epsilon} \,dx \geq c(n)\sum_{i=1}^{\infty} r_i^{n-1-3\delta},$$
 where the expression on the right goes to $\infty$ as $\eta \rightarrow 0$ because the Hausdorff dimension of $S \times \{0\} \times (-1,1)^{n-2}$ is $n-1-\frac{3}{2}\delta$.
 Thus, $\Delta u$ is not $L^{1+\epsilon}$ for $\epsilon \geq 4\delta$.

 \begin{rem}
  In future work we intend to present a more precise version of Theorem \ref{Main} which gives $L\log L$ regularity of second derivatives of
 singular solutions to $$\lambda \leq \det D^2u \leq \Lambda.$$
 \end{rem}

%%%%%%%%%%%%%%%%%%%%%%%%%%%%%%%%%%%%%%%%%%%%%%%%%%%%%%%%%%%%%%%%%%%%%%%%%%%%%%%%%%%%%%%%%%%%%%%%%%%%%%
\section{Unique Continuation}\label{UCsection}
Assume that $u,v$ satisfy the hypotheses of Theorem \ref{UniqueContinuation}.
For our proof of unique continuation we rely on the following classical unique continuation theorem for linear equations (see \cite{GL}):
\begin{thm}\label{ClassicalUC}
 Assume that $\Omega \subset \mathbb{R}^n$ is a connected open set and $u \in W^{1,2}_{loc}(\Omega)$ is a weak solution to the equation
 $$\partial_i(a^{ij}(x)u_j) + b^{i}(x)u_i + c(x)u = 0,$$
 where $a^{ij}(x)$ is Lipschitz and uniformly elliptic and $b^{i}(x), c(x)$ are bounded measurable. If $u = 0$ on some open subset of $\Omega$, then
 $u \equiv 0$ in $\Omega$.
\end{thm}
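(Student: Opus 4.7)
The plan is to prove the theorem via the Almgren-type frequency function method of Garofalo and Lin. Since the hypothesis that $u$ vanishes on an open subset is strictly stronger than vanishing to infinite order at an interior point, once a monotonicity formula and associated doubling inequality are established around each point of $\Omega$, a connectedness argument will finish the job.

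First I would reduce to a convenient normal form near a fixed $x_0 \in \Omega$. After translating to the origin, a Lipschitz change of variables whose Jacobian at $0$ diagonalizes $a^{ij}(0)$ lets us assume $a^{ij}(0) = \delta^{ij}$, with the new $a^{ij}$ still Lipschitz and uniformly elliptic. The first-order term $b^i u_i$ can be absorbed into a zeroth-order perturbation via the substitution $u = e^\phi w$ for suitable Lipschitz $\phi$, or alternatively kept and controlled by Cauchy--Schwarz in the monotonicity computation below, since only $\|A\|_{\mathrm{Lip}}$, $\|b\|_\infty$, and $\|c\|_\infty$ will enter as error constants.

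For small $r > 0$ I would introduce the Almgren-type quantities
\begin{equation*}
H(r) = \int_{\partial B_r} \mu \, u^2 \, d\sigma, \qquad D(r) = \int_{B_r} a^{ij} u_i u_j \, dx, \qquad N(r) = \frac{r D(r)}{H(r)},
\end{equation*}
where $\mu(x) = a^{ij}(x) \nu_i \nu_j$ is the natural weight aligning the conormal derivative with the Euclidean normal. The core computation is to differentiate $\log H$ and $\log D$. For $H$ one obtains the geometric factor $(n-1)/r$ plus the mixed boundary term $2\int \mu\, u\, u_\nu$; for $D$, a Rellich--Pohozaev identity applied to the vector field $x \cdot \nabla$, together with the equation, produces $2\int \mu\, u_\nu^{\,2}$ plus error terms of size $O(\|A\|_{\mathrm{Lip}} + \|b\|_\infty + \|c\|_\infty)$ acting on $D(r)$ and on $r^{-1} H(r)$. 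Bounding the cross term by Cauchy--Schwarz on $\partial B_r$ and combining the two derivatives gives
\begin{equation*}
(\log N(r))' \geq -C,
\end{equation*}
so that $e^{C r} N(r)$ is non-decreasing. I expect the Pohozaev identity at merely Lipschitz regularity of $A$ and $W^{1,2}_{\mathrm{loc}}$ regularity of $u$ to be the main technical obstacle, requiring a careful mollification of the coefficients together with a Caccioppoli-type estimate to justify the boundary integrations by parts and the passage to the limit.

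Once monotonicity is in hand, integrating $(\log H(r))' = (n-1)/r + 2 N(r)/r + O(1)$ yields a doubling inequality of the form $H(2r) \leq C e^{C N(2r)} H(r)$. If $u \equiv 0$ on an open set containing $x_0$, then $H(r) = 0$ for all small $r$, and monotonicity combined with doubling forces $H \equiv 0$ on every admissible scale around $x_0$, hence $u \equiv 0$ on a full neighborhood of $x_0$. Setting $\Omega_0 = \{x \in \Omega : u \equiv 0 \text{ in some neighborhood of } x\}$, the set $\Omega_0$ is open by definition, nonempty by hypothesis, and closed in $\Omega$ by the local propagation just established (apply the previous argument at any point in the closure). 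Connectedness of $\Omega$ then gives $\Omega_0 = \Omega$, completing the proof.
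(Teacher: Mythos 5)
The paper does not actually prove this theorem: it is imported as a classical result of Garofalo and Lin \cite{GL} (see also H\"ormander \cite{H}), so the relevant benchmark is the cited reference, and your plan is exactly the Garofalo--Lin frequency-function scheme --- the quantities $H(r)$, $D(r)$, $N(r)=rD(r)/H(r)$ with weight $\mu=a^{ij}\nu_i\nu_j$, almost-monotonicity $\bigl(e^{Cr}N(r)\bigr)'\geq 0$, the resulting doubling inequality, and the open--closed propagation argument. So you have chosen the correct (indeed, the canonical) route.

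As a standalone proof, however, what you have written is an outline of \cite{GL} rather than a proof, and the deferred step is the entire content of the theorem. The almost-monotonicity of $N$ for merely Lipschitz $a^{ij}$ and bounded $b^i,c$ is precisely what \cite{GL} establishes, and it is not a routine mollification exercise: Garofalo and Lin work in coordinates adapted to the metric $a_{ij}$ (a modified distance/conformal normalization) so that the Rellich--Pohozaev commutator terms are genuinely of lower order, and the lower-order coefficients are handled by modifying $D$ (equivalently $N$) rather than by a crude Cauchy--Schwarz at the end. Moreover, your first suggested reduction is incorrect as stated: the substitution $u=e^{\phi}w$ removes the drift only if $\tfrac12 A^{-1}b$ is a gradient field, which a general bounded measurable $b$ is not; your fallback of keeping $b$ in the monotonicity computation is the right one, but then the error it produces must be absorbed into the frequency itself, not just into an additive constant. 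Finally, two smaller points of rigor: the statement ``$H(r)=0$ for small $r$ plus doubling forces $H\equiv 0$'' needs the standard contrapositive formulation, since $N$ is undefined where $H$ vanishes (one argues that if $H(r_0)>0$ then $H(r)\geq c\,(r/r_0)^{K}H(r_0)>0$ for all smaller $r$, and takes the contrapositive); and in the closedness step one should apply the propagation at points of $\Omega_0$ near the boundary point, with a radius locally uniform in the center, rather than at the closure point itself. None of these change the strategy, but without the monotonicity formula actually carried out the proposal is in substance a restatement of the citation the paper already makes.
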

In \cite{AS}, the authors use the same theorem to prove unique continuation for fully nonlinear uniformly elliptic equations.
As in \cite{AS}, we note that Theorem \ref{ClassicalUC} also applies to classical solutions of nondivergence equations with Lipschitz
coefficients, which may be rewritten in the divergence form above. 
A more general version of this statement, proved using Carleman estimates, can be found in H\"{o}rmander's book \cite{H}, Theorem $17.2.6$.

We will apply this result to the difference of $u$ and $v$, which solves a linear equation where $u$ and $v$ are sufficiently regular. Indeed, suppose
$u$ and $v$ are $C^2$ in a neighborhood of $x$ and let $w_t$ be the convex combination $tu + (1-t)v$. Let $(W_t)^{ij}$ be the matrix of cofactors 
for $D^2w_t$. Then by expanding $0 = \int_{0}^1 \frac{d}{dt} \det D^2w_t dt$ we get
$$a^{ij}(x)(u-v)_{ij} = 0,$$
where
$$a^{ij}(x) = \int_{0}^1 (W_t)^{ij}(x)dt.$$

The regularity theory of Caffarelli \cite{C2} allows us to use this observation at points of strict convexity for solutions to the Monge-Amp\`{e}re equation:
\begin{thm}\label{W2p}
Assume
$$\det D^2u = f \quad \text{ in } \Omega, \quad \quad u|_{\partial\Omega} = 0$$
where $f \in C^{1,\alpha}(\Omega)$ is strictly positive. Then 
$$u \in C^{3,\alpha}(\Omega).$$
\end{thm}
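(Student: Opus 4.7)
The plan is a two-step bootstrap resting on Caffarelli's interior theory. The Dirichlet problem for a convex function with zero boundary data requires $\Omega$ to be convex and bounded, which I assume throughout. On any compactly contained subdomain, $f \in C^{1,\alpha}$ is trapped between positive constants, so Caffarelli's strict convexity theorem for Alexandrov solutions to $\lambda \leq \det D^2 u \leq \Lambda$ vanishing on the boundary of a bounded convex domain gives that $u$ is strictly convex in $\Omega$. Fix any $x \in \Omega$ and a section $S_{h,p}^u(x) \subset\subset \Omega$; since $f \in C^\alpha$, item (2) from the introduction (Caffarelli's $C^{2,\alpha}$ estimate at points of strict convexity) yields $u \in C^{2,\alpha}_{\mathrm{loc}}(S_{h,p}^u(x))$, and $x$ was arbitrary so $u \in C^{2,\alpha}_{\mathrm{loc}}(\Omega)$.

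To push one derivative further, I would linearize: differentiating $\log \det D^2 u = \log f$ in the direction $e_k$ gives
$$u^{ij}\,\partial_{ij} u_k \;=\; (\log f)_k,$$
where $u^{ij}$ denotes the inverse of the Hessian matrix $u_{ij}$. By the $C^{2,\alpha}$ regularity just obtained, $D^2 u$ is continuous and positive definite on $S_{h/2,p}^u(x)$, hence locally bounded above, while $\det D^2 u \geq \lambda > 0$ bounds the smallest eigenvalue from below; so $u^{ij}\in C^\alpha$ and the linear equation for $u_k$ is uniformly elliptic. The right-hand side $(\log f)_k = f_k/f$ lies in $C^\alpha$ because $f \in C^{1,\alpha}$ is strictly positive. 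Interior Schauder estimates then give $u_k \in C^{2,\alpha}_{\mathrm{loc}}$, i.e.\ $u \in C^{3,\alpha}_{\mathrm{loc}}(\Omega)$.

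The main difficulty here is really packaged into the two Caffarelli black boxes: strict convexity for the Dirichlet problem with strictly positive right-hand side, and interior $C^{2,\alpha}$ for strictly convex solutions. Granted those, the step from $C^{2,\alpha}$ to $C^{3,\alpha}$ is the standard linearization-plus-Schauder bootstrap, and one could iterate it further to obtain higher regularity compatible with $f$.
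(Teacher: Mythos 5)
Your argument is correct and is the standard justification; the paper itself offers no proof of this statement, simply citing it as part of Caffarelli's regularity theory, so there is no competing argument to compare against. The only point a careful reader would ask you to tighten is the linearization step: since $u$ is a priori only $C^{2,\alpha}$, the identity $u^{ij}\partial_{ij}u_k=(\log f)_k$ cannot be obtained by differentiating the equation pointwise, and one should instead run the Schauder estimate on difference quotients $\left(u(x+he_k)-u(x)\right)/h$, which solve $a^{ij}_h\partial_{ij}\left(\frac{u(\cdot+he_k)-u}{h}\right)=\frac{\log f(\cdot+he_k)-\log f}{h}$ with coefficients $a^{ij}_h$ that are $C^{\alpha}$ uniformly in $h$ by the $C^{2,\alpha}$ bound already established, and then pass to the limit. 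With that standard fix, the two Caffarelli inputs (strict convexity from the zero boundary data on a convex domain, then interior $C^{2,\alpha}$) plus the Schauder bootstrap give exactly the claimed interior $C^{3,\alpha}$ regularity.
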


Finally, we observe that open sets whose complements have zero Hausdorff $n-1$ dimensional measure are connected.

\begin{lem}\label{Connected}
Assume $K \subset \mathbb{R}^n$ is closed, and assume further that $\mathcal{H}^{n-1}(K) = 0$. Then $\mathbb{R}^n - K$ is pathwise connected.
\end{lem}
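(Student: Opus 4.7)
The plan is to connect any two points $x, y \in \mathbb{R}^n \setminus K$ by a polygonal path with two segments, $x \to z \to y$, where the intermediate vertex $z$ is chosen so that neither segment meets $K$. The crux will be to show that the set of ``bad'' midpoints $z$ --- those for which one of the two segments crosses $K$ --- has Lebesgue measure zero, so a good $z$ must exist in any open ball containing $x$ and $y$.

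Fix $x, y \in \mathbb{R}^n \setminus K$ and pick $R > 0$ with $x, y \in B_R$. Since $K$ is closed and $x \notin K$, we have $d(x, K) > 0$, and so the radial projection $\pi_x(p) := (p-x)/|p-x|$ from $x$ is Lipschitz on $K \cap \overline{B_R}$ with constant at most $2/d(x,K)$. Because Lipschitz maps increase Hausdorff $(n-1)$-measure by at most a multiplicative constant, the hypothesis $\mathcal{H}^{n-1}(K) = 0$ forces $\mathcal{H}^{n-1}(\pi_x(K \cap \overline{B_R})) = 0$ as a subset of $S^{n-1}$. The cone
$$C_x := \{x + t\omega : \omega \in \pi_x(K \cap \overline{B_R}),\ t \geq 0\}$$
contains every point $z$ such that the segment $[x,z]$ meets $K \cap \overline{B_R}$. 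A polar-coordinate computation centered at $x$ then yields
$$|C_x \cap B_R| \leq \frac{(R+|x|)^n}{n}\, \mathcal{H}^{n-1}(\pi_x(K \cap \overline{B_R})) = 0,$$
and the analogous cone $C_y$ from $y$ has Lebesgue measure zero as well.

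Since $K$ (being $\mathcal{H}^{n-1}$-null, and therefore Lebesgue-null) together with $C_x$ and $C_y$ forms a Lebesgue-null set, the ball $B_R$ must contain a point $z \notin K \cup C_x \cup C_y$. For this $z$, both segments $[x,z]$ and $[z,y]$ lie in $B_R$ by convexity and avoid $K$ by the defining property of $C_x$ and $C_y$; their concatenation is a continuous path in $\mathbb{R}^n \setminus K$ from $x$ to $y$. The main (and really only) obstacle is the measure estimate on the cones, and this is precisely the point at which the closedness of $K$ --- which gives $d(x, K), d(y, K) > 0$ and hence the Lipschitz property of the radial projections --- enters the argument.
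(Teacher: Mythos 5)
Your proof is correct, and it takes a genuinely different route from the paper's. The paper argues by contradiction: assuming two path components exist, it normalizes so that the two points are $\pm Re_n$ with the flat disks $\{|x'|<1\}\times\{\pm R\}$ contained in the complement, and then observes that the orthogonal projection $K'$ of $K$ onto $\{x_n=0\}$ would have to cover all of $B_1'$ (otherwise a vertical line through a point of $B_1'\setminus K'$, capped by segments inside the two disks, completes a path); since orthogonal projection is $1$-Lipschitz, $\mathcal{H}^{n-1}(K')\leq\mathcal{H}^{n-1}(K)=0$, a contradiction. You replace the orthogonal projection by the radial projections from the two given points and build the connecting path directly. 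The price is the two extra (easy) observations you correctly flag: $\pi_x$ is Lipschitz on $K\cap\overline{B_R}$ only because $d(x,K)>0$, which is where closedness of $K$ enters, and one needs the polar-coordinates estimate for the volume of a cone over an $\mathcal{H}^{n-1}$-null subset of the sphere. What you gain is a direct, slightly stronger conclusion --- any two points of the complement are joined by a two-segment polygonal path, with the intermediate vertex available off a Lebesgue-null set --- and you avoid the paper's normalization step (rotation, translation, scaling to place the disks inside the open complement). Both arguments rest on the same underlying principle: Lipschitz images of $\mathcal{H}^{n-1}$-null sets are $\mathcal{H}^{n-1}$-null, so such a set cannot obstruct a positive-measure family of lines.
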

\begin{proof}
 Assume by way of contradiction that $D = \mathbb{R}^n-K$ is not pathwise connected. Since $D$ is open, by rotation, translation and scaling we can assume that the points
 $\pm Re_n \in D$ cannot be connected by any continous path through $D$ and that
 $$\{|x'| < 1\} \times \{\pm R\} \subset D.$$

 Let $K'$ be the projection of $K$ onto $\{x_n = 0\}$ and let $B_1' = B_1 \cap \{x_n=0\}$. If $B_1'-K' \neq \emptyset$, 
 this would violate the contradiction hypothesis because then we could find a point $x' \in B_1'$ such that $(x',t) \in D$ for all $t \in \mathbb{R}$ and 
 take our path to be the straight lines from $-Re_n$ to $(x',-R)$ to $(x',R)$ to $Re_n$.

 We conclude that for any cover of $K$ by balls $\{B_{r_i}(x_i)\}_{i=1}^{\infty}$, we have
 $$\sum_{i} r_i^{n-1} \geq 1,$$
 contradicting that $\mathcal{H}^{n-1}(K) = 0$. 
\end{proof}

The proof of unique continuation follows easily from these observations and our main theorem.

\begin{proof}[\textbf{Proof of Theorem \ref{UniqueContinuation}:}]
Let $\Sigma_u$ and $\Sigma_v$ be the singular sets of $u$ and $v$ respectively, and let $A = \Omega - (\Sigma_u \cup \Sigma_v)$.
Since $A$ is dense in $\Omega$, it suffices to show that $u = v$ on $A$. 

By Caffarelli's theory (\cite{C1}), $A$ is an open set. Indeed, for $x \in A$ we can find some $p$ in $\mathbb{R}^n$ and $h$ small such that $S_{h,p}^u(x) \subset\subset\Omega$, 
and since $f$ is bounded in this section Caffarelli gives that $u$ is strictly convex in a neighborhood of $x$. (The same reasoning gives that $v$ is strictly convex in a neighborhood of $x$.)
By Theorem \ref{Main}, the complement of $A$ has Hausdorff $n-1$ dimensional measure zero, so by Lemma \ref{Connected}, $A$ is connected.
By Theorem \ref{W2p}, the difference $u-v$ satisfies the linear equation
$$a^{ij}(x)(u-v)_{ij} = 0$$
on $A$, where $a^{ij}$ are locally uniformly elliptic and $C^{1,\alpha}$ in $A$. The conclusion follows from Theorem \ref{ClassicalUC}.
\end{proof}

%%%%%%%%%%%%%%%%%%%%%%%%%%%%%%%%%%%%%%%%%%%%%%%%%%%%%%%%%%%%%%%%%%%%%%%%%%%%%%%%%%%

 \section*{Acknowledgement}
 I would like to thank my thesis advisor Ovidiu Savin for his patient guidance and for his feedback on the the drafts of this paper. 
 I would also like to thank Jun Kitagawa, Nam Le and Yu Wang for helpful comments.

 The author was partially supported by the NSF Graduate Research Fellowship Program under grant number DGE 1144155. 

%%      ---------------------------------------------------------------------
%%      --------------------------- BIBLIOGRAPHY ----------------------------
%%      ---------------------------------------------------------------------

\frenchspacing
\bibliographystyle{plain}

\end{document}